\documentclass[12pt]{article}

\usepackage[backend=bibtex, doi=false, url=false, isbn=false]{biblatex}
\DeclareNameAlias{sortname}{last-first}

\usepackage{fullpage}
\usepackage[T1]{fontenc}
\usepackage{lmodern}
\usepackage{hyperref}
\usepackage{xcolor}
\definecolor{link-color}{rgb}{0.15,0.4,0.15}
\hypersetup{
  colorlinks,
  linkcolor = {link-color}, citecolor = {link-color},
}

\usepackage{graphicx}
\usepackage[font={small}]{caption}
\usepackage{subcaption}
\usepackage{hyperref}
\usepackage{amsmath,amsthm,amssymb}
\usepackage{bbm}
\usepackage{tabularx}

\newtheorem{theorem}{Theorem}[section]

\newtheorem{lemma}[theorem]{Lemma}
\newtheorem{conjecture}[theorem]{Conjecture}
\theoremstyle{plain}


\newcommand{\N}{\mathbb{N}}
\renewcommand{\P}{\mathbb{P}}
\newcommand{\E}{\mathbb{E}}
\newcommand{\1}{\mathbbm{1}}

\newenvironment{enum}{\begin{list}{(\roman{enumi})}{\usecounter{enumi}}}{\end{list}}

\usepackage{bm}

\newcommand{\orb}{{\rm orb}}

\title{Existence of a phase transition of the interchange process on the Hamming graph}
\author{Piotr Mi\l{}o\'s\footnote{University of Warsaw, MIMUW, Banacha 2, 02--097 Warszawa, Poland, Email: \texttt{pmilos@mimuw.edu.pl}}, Bat\i{} \c{S}eng\"ul\footnote{Department of Mathematical Sciences, University of Bath, Claverton Down, Bath, BA2 7AY, UK.\@ Email: \texttt{batisengul@gmail.com}}}

\bibliography{references}

\begin{document}

\maketitle

\begin{abstract}
	The interchange process on a finite graph is obtained by placing a particle on each vertex of the graph, then at rate $1$, selecting an edge uniformly at random and swapping the two particles at either end of this edge.
	In this paper we develop new techniques to show the existence of a phase transition of the interchange process on the $2$-dimensional Hamming graph.
  We show that in the subcritical phase, all of the cycles of the process have length $O(\log n)$, whereas in the supercritical phase a positive density of vertices lie in cycles of length at least $n^{2-\varepsilon}$ for any $\varepsilon>0$.
\end{abstract}

\section{Introduction and main results}
The interchange process $\sigma=(\sigma_t:t \geq 0)$ is defined on a finite, connected, undirected graph $G=(V,E)$ as follows.
For each $t \geq 0$, $\sigma_t:V\rightarrow V$ is a permutation on the vertices of the graph and initially $\sigma_0$ is the identity permutation.
Then at rate $1$, an edge $e \in E$ is selected uniformly at random and the vertices on either end of $e$ are swapped in the permutation. Under the assumption of bounded degree, this can be defined similarly on infinite graphs as well.

This model was introduced by \textcite{MR1224836} who used it to derive results about the quantum Heisenberg ferromagnetic model.
Since then, this has been studied on infinite regular trees~\cite{MR2042369,MR3097418,MR3334273} and the complete graph~\cite{MR2166362,MR2754801,MR3351179}.
Recently, \textcite{hypercube_stirring} extended the techniques of \textcite{MR2754801} to show some results about the interchange process on the hypercube.
Kozma and Sidoravicius have announced that they have computed the expected cycle size on $\mathbb Z$ and also similar questions have been answered for a different, but related model by \textcite{2016arXiv160106991G}.

The interchange process is of interest in both physics and mathematics, where outside of the mentioned work, very little is known.
The prior results on trees and the complete graph use very particular properties of these graphs which make them rather hard to extend to other graphs.
It is widely believed that similar results should hold for a large variety of graphs.
An important open conjecture is that for the interchange process on $\mathbb{Z}^d$, there exists a time $t_c=t_c(d)$ such that for $t < t_c$, every cycle of $\sigma_t$ has finite length and for $t>t_c$, $\sigma_t$ has a cycle of infinite length. Moreover it is believed that $t_c<\infty$ if and only if $d \geq 3$.

In this paper we consider the $2$--dimensional Hamming graph $H(2,n)=(V,E)$ for which the vertices $V=\{0,\dots,n-1\}^2$ are given by the square lattice and an edge is present between any pair of vertices which either are on the same row or the same column.
We will often denote this graph by $H$.
The Hamming graph is the product of two complete graphs, so it is somewhat surprising that applying the techniques on the complete graph directly gives results that are far from optimal.
Instead, in this paper we obtain results that are almost optimal, for which we have to develop a lot of new ideas. We believe that this work is a step in moving away from the complete graph, which is well understood, towards other graphs with different geometry, and in particular, towards $\mathbb{Z}^d$.

Our main result is the occurrence of a phase transition for the interchange process on $H$.

\begin{theorem}\label{thm:phase}
	Consider the random interchange process $(\sigma_t:t \geq 0)$ on the $2$--dimensional Hamming graph $H(2,n)$.
	For $v\in V$ define the cycle containing $v$ by
	\[
		\orb_t(v)=\{\underbrace{\sigma_t\circ\dots\circ\sigma_t}_{\ell}(v): \ell=0,1,\dots\}.
	\]
	and vertices belonging to cycles longer than $k$
	\[
		V_t(k) = \{v\in V: \orb_t(v)\geq k\}.
	\]

  Let $t=\beta n^2$, then process exhibits a phase transition. For $\beta<1/2$ we are in subcritical phase where there exist a constant $C>0$ such that
		\[
			\lim_{n \rightarrow \infty}\P(|V_t(C \log n)|=0)=1.
		\]
	For $\beta>1/2$ we are in the supercritical phase where for any $\varepsilon>0$ there exists a constant $C>0$ such that
		\[
			\lim_{n \rightarrow \infty}\P\left(\big |V_t(n^{2-\varepsilon})\big |\geq Cn^2 \right) =1.
		\]
\end{theorem}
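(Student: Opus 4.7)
I would work throughout with the Poisson arrow representation of the interchange process: attach to each edge $e\in E$ an independent Poisson process $\Pi_e$ of rate $1/|E|$, so that $\sigma_t$ is obtained by composing chronologically the transpositions associated with the arrows in $[0,t]$, and equivalently $\orb_t(v)$ is the set of vertices visited by the loop starting at $(v,0)$ in the space-time cylinder $V\times[0,t]$. Let $G^\star\subseteq H$ be the random subgraph whose edges are those $e$ with at least one arrow in $[0,t]$. Then $G^\star$ is Bernoulli bond percolation on $H(2,n)$ with parameter
\[
  p \;=\; 1 - e^{-t/|E|} \;=\; \frac{\beta}{n-1}+O(n^{-2}),
\]
and since every step of a loop follows a $G^\star$-edge, one has the deterministic containment $\orb_t(v)\subseteq \mathcal K(v)$, where $\mathcal K(v)$ denotes the connected component of $v$ in $G^\star$.

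For the subcritical phase I proceed by percolation domination. Every vertex of $H(2,n)$ has degree $2(n-1)$ and therefore expected number of $G^\star$-neighbours equal to $2p(n-1)\to 2\beta<1$. An exploration of $\mathcal K(v)$ is dominated by a subcritical Galton--Watson process of mean $2\beta$: although two $H$-adjacent vertices share $n-2$ common neighbours, each common neighbour joins the exploration with probability only $p^2=O(n^{-2})$, so the local tree-like approximation remains valid. Standard branching-process tail bounds give $\P(|\mathcal K(v)|\geq k)\leq e^{-c(\beta)k}$ for some $c(\beta)>0$, and a union bound over the $n^2$ vertices combined with $\orb_t(v)\subseteq \mathcal K(v)$ yields $|V_t(C\log n)|=0$ w.h.p.\ for large enough $C=C(\beta)$.

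In the supercritical phase I would first show (Step A) that $G^\star$ has a giant component $\mathcal C$ of size at least $\tfrac{1}{2}\alpha(\beta)n^2$ w.h.p., where $\alpha(\beta)>0$ is the survival probability of the (now supercritical) branching process. This is an adaptation of the classical Erd\H{o}s--R\'enyi giant-component argument to the Hamming graph via sprinkling: reveal $G^\star$ at a slightly smaller parameter $p'<p$, identify the large components using the tree-like approximation, and exploit the remaining independent arrows of density $p-p'$ to glue them into a single giant.

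Step B, the main obstacle, is to upgrade \emph{large component} to \emph{long cycle}, since a priori $\sigma_t|_{\mathcal C}$ could split into many short orbits. The plan is a time-splitting and mixing argument: write $t=t_1+t_2$ with both $\beta_i:=t_i/n^2>1/2$, use Step A at time $t_1$ to produce a giant component $\mathcal C_1$, and then exploit the additional arrows in $(t_1,t]$ to merge cycles inside $\mathcal C_1$. In the loop representation each arrow changes the cycle count by $\pm 1$ (merging if its two endpoints lie in different cycles of the current permutation, splitting if they lie in the same cycle), and the key estimate to prove is that the number of cycles of $\sigma_t$ fully contained in $\mathcal C_1$ is at most $n^\eps$ w.h.p. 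Pigeon-hole then yields a cycle of length at least $|\mathcal C_1|/n^\eps\geq \tfrac{1}{2}\alpha(\beta)n^{2-\eps}$, and vertex-transitivity of $H(2,n)$ gives a positive density of vertices in such a cycle. Controlling this cycle count---the Hamming-graph analogue of the delicate step in Schramm's analysis of $K_n$, but requiring a new argument that exploits the product structure $K_n\square K_n$ rather than the full transposition symmetry of $K_n$---is where the paper's new techniques are expected to do their work.
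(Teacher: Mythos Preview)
Your subcritical argument matches the paper's: both couple $\sigma_t$ to the random graph $G^\star$ (the paper writes $G^0_t$), invoke a subcritical branching comparison to bound component sizes by $C\log n$, and conclude via the containment $\orb_t(v)\subseteq\mathcal K(v)$.

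For the supercritical phase your broad outline (giant component, then upgrade to long cycles) is right, but the mechanism you propose diverges from the paper in an essential way, and the gap you flag is not filled by the paper's techniques in the form you anticipate.

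The paper does \emph{not} bound the total number of cycles inside the giant by $n^\eps$. Instead it runs an \emph{iterative bootstrap}: fix a sequence $\alpha_1<\alpha_2<\cdots$ with $\alpha_h\to 2$; assuming a positive density of vertices lie in cycles of length $\geq n^{\alpha_h}$, couple $\sigma$ to a graph process $G^{t}_s$ started at the current time $t$ (not at $0$), run it only for a \emph{short} window $\Delta_h=n^{2-\alpha_h}\log n$, and show that (i) a giant forms in $G^t_{\Delta_h}$ by sprinkling, while (ii) the number of vertices in the giant that end up in cycles shorter than $n^{\alpha_{h+1}}$ is $o(n^2)$. Step (ii) is a Schramm-type estimate (their Lemma~4.2) on the number of \emph{fragmentation events producing a piece of size $<\ell$}, and its crucial input is an \emph{isoperimetric} bound: with high probability no length-$k$ segment of any cycle puts more than $\log^2 n$ vertices on any single row or column (Section~3, proved via the cyclic random walk). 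This caps the rate of bad splits at $O\!\big(\ell\log^2 n/(kn)\big)$ per transposition, and since only $\Delta_h$ transpositions occur, the total loss is controlled. One pass of this recursion takes $\alpha_h$ to roughly $\tfrac12(1+\alpha_h+\min\{1,\alpha_h\})$, and iterating drives $\alpha_h\to 2$.

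Your single split $t=t_1+t_2$ with $t_2=\Theta(n^2)$ runs into precisely the obstruction the paper's short-window iteration is designed to avoid: over $\Theta(n^2)$ transpositions the number of splits is of order $n^2$, and without isoperimetric control a cycle could sit entirely on one row and split at rate $\Theta(1/n)$ per step rather than $\Theta(|\text{cycle}|/n^2)$. Bounding the cycle count inside $\mathcal C_1$ by $n^\eps$ directly would require something at least as strong as what the paper proves; the paper sidesteps it by bounding the \emph{mass in short cycles} over a short window and iterating, with the isoperimetry of the cycles themselves (established via the CRW, not just the product structure $K_n\square K_n$) doing the real work.
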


We expect that our result are almost sharp and conjecture that for $\beta>1/2$ with high probability there exists cycles of macroscopic size.

The main technical contribution of our paper, instrumental to show Theorem \ref{thm:phase}, is to show that cycles cannot concentrate too much on any row or column of the graph $H(2,n)$. Roughly speaking it is expected that the cycles of the interchange process should resemble is many aspects a random walk on the graph. To the best of our knowledge, we are the first to successfully implement this idea for the interchange model.

Refining the estimates in this paper and adapting the argument of~\textcite{MR2754801}, we can show that in supercritical phase
\begin{equation}\label{eq:whatIfWeUsedBerestycki}
	\lim_{n \rightarrow \infty}\P\left( \sup_{u\in[t-\Delta_n, t]} \big|V_u(n^{2}/\log^4 n)\big |\geq Cn^2 \right) =1,
\end{equation}
for any $\Delta_n\geq n^{1/2}\log^3 n$.
We predict that the $\log^4 n$ above could also be replaced by polylog$(n)$ with sharper estimates.
To replace the $\log^4 n$ by a constant, thereby showing the existence of macroscopic cycles, would require some new ideas.


We discuss the above questions in more detail in Section~\ref{sec:openQuestionsAndDiscussion}.

\subsection{Heuristics and the outline of the paper}

The subcritical phase will follow from an easy coupling with percolation on $H$ and so we focus on explaining the heuristics behind the supercritical phase.

In the supercritical phase we proceed iteratively proving existence of larger and larger cycles.
Let $t = \beta n^2$ with $\beta>1/2$.
It is easy to show that for small $\alpha_1>0$ cycles of size $n^{\alpha_1}$ occupy a positive density of vertices.
We introduce a random graph process $G^t=(G^t_s:s \geq 0)$. Initially $G_0^t$ is a graph whose connected components are the cycles of $\sigma_t$. Next, whenever $(\sigma_{t+s}:s \geq 0)$ swaps a pair of particles across an edge $e$, we add $e$ to the graph process.
As we increase $s$, the largest component in $G^t_s$ becomes giant rather quickly; in fact, an easy sprinkling argument shows that this happens after $n^{2-\alpha_1} \log n$ units of time.
In this short time, there cannot be too many splits which result in cycles of size less than $n^{\alpha_2}$ for some $\alpha_2>0$. In other words, almost every vertex lying inside of the giant component of $G^t$ belongs to a cycle bigger than $n^{\alpha_2}$.

Clearly, the bigger $\alpha_2$ the better, and the main difficulty is that there might be cycles which ``split too easily''.
A pathological example is a cycle which has its support on a single line or a single column.
We show is that with high probability most cycles do not behave in this pathological way.
In some aspects the cycle of $\sigma_t$ resembles the trace of a simple random walk on $H$, which in turn resembles a set of i.i.d. uniformly chosen point. In the last case it is not hard to observe that the vertices cannot cluster in any line or column.
Formalising the above is an extremely important technical result and turns out to be rather delicate.

In the proof as an input we use information that cycles of size $n^{\alpha_1}$ are common. Once we obtain cycles of size $n^{\alpha_2}$ we can can repeat the analysis obtaining better estimates and proving existence of even longer cycles of size $n^{\alpha_3}$, for $\alpha_3>\alpha_2$. Our methods are sharp enough to continue inductively with a sequence $\alpha_k$ converging to $2$.

\subsection*{Outline of the paper}
The paper is orginised as follows. In Section~\ref{sec:graph_notation} we introduce our notion of isoperimetry and show some basic results.
In Section~\ref{sec:crw} we introduce the cyclic random walk, which plays an important role in our proof. Next in Section~\ref{sec:isoperimetry} we use the cyclic random walk to give bounds on the isopermetry of the cycles of $\sigma_t$.
In Section~\ref{sec:longCyclesUnderGoodIsoperymetry} we obtain results which give lower bounds for the cycle lengths, under the assumption of good isopermetry.
In Section~\ref{sec:mainProof} we combine the previous results to show Theorem~\ref{thm:phase}. Finally, Section \ref{sec:openQuestionsAndDiscussion} contains open questions and further discussion.

\section{Definitions and preliminary results}
\subsection{Isoperimetry}\label{sec:graph_notation}

Let $H=H(2,n)$ be the $2$--dimensional Hamming graph.
Let $V$ denote the vertices and $E$ denote the edges of $H$.
Recall that the vertices $V=\{0,\dots,n-1\}^2$.
For $i\in\{0,\ldots,n-1\}$ sets $L_i =\{0,\dots,n-1\}\times \{i\}$ will be called rows and $D_i=\{i\}\times \{0,\ldots, n-1\}$ columns.
An edge $e \in E$ is placed between any two distinct vertices on the same row or column.
One can check that $|V|=n^2$ and $|E|=n^2(n-1)$.
In the whole paper we assume implicitly that $n\geq 2$.

In the proofs we will extensively use isoperimetric properties of sets.
Let us fix the notation.
Given $A,B\subset V$ by $E(A,B)$ denote the set of edges $(v,w)\in E$ such that $v \in A$ and $w \in B$.
Next we define a notion of isoperimetry for a set $A\subset V$ by setting
\begin{equation}\label{eq:iotaDefinition}
	\iota(A) :=\max\left\{\max_{i\in \{0,\dots,n-1\}} |L_i\cap A|\, ,\, \max_{i\in \{0,\dots,n-1\}} |D_i\cap A| \right\}.
\end{equation}
We think of $\iota(A)$ as an isopermetric constant of $A$.
Notice that $\iota$ is sub-additive in the sense that for $A, B \subset V$, $\iota(A \cup B)\leq \iota(A) + \iota(B)$.

The next lemma shows how to bound $|E(A,A)|$ using $\iota(A)$.

\begin{lemma}\label{lem:isoperymetryByIota}
	Let $A\subset V$ then
	\[
		|E(A,A)|\leq |A|\iota(A).
	\]
\end{lemma}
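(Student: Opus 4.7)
The plan is to decompose the edge count $|E(A,A)|$ according to which row or column each edge lies in, then bound each line's contribution by $|A \cap L_i| \cdot \iota(A)/2$ (and similarly for columns).

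First I would observe that every edge of $H$ joins two distinct vertices that share either a row or a column, and these two cases are disjoint. Therefore
\[
    |E(A,A)| = \sum_{i=0}^{n-1} \binom{|A \cap L_i|}{2} + \sum_{i=0}^{n-1} \binom{|A \cap D_i|}{2},
\]
since within a row $L_i$ the edges contained in $A$ are exactly the pairs of distinct vertices of $A \cap L_i$, and likewise for columns.

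Next I would use the bound $\binom{k}{2} = \tfrac{k(k-1)}{2} \le \tfrac{k}{2}\iota(A)$, which holds for every line by the definition of $\iota(A)$ (since $|A\cap L_i|\le \iota(A)$ and $|A\cap D_i|\le \iota(A)$). Summing this over all rows gives
\[
    \sum_{i=0}^{n-1}\binom{|A\cap L_i|}{2} \le \frac{\iota(A)}{2}\sum_{i=0}^{n-1}|A\cap L_i| = \frac{|A|\iota(A)}{2},
\]
because the rows partition $V$. The same bound holds for the column contribution. Adding the two yields the claim $|E(A,A)| \le |A|\iota(A)$.

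There is no real obstacle here: the argument is purely combinatorial and the key input is only that each row and each column contributes, via $\binom{k}{2}\le k\,\iota(A)/2$, a factor of $\iota(A)/2$ per vertex of $A$ that it contains. The factor of $2$ absorbed by having both a row-sum and a column-sum is precisely what allows us to remove the $1/2$ in the final bound.
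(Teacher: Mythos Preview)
Your proof is correct and is essentially the same double-counting argument as the paper's, just organized by line rather than by vertex: the paper bounds the degree of each $v\in A$ within $A$ by $2\iota(A)$ and divides by two for double counting, while you sum $\binom{|A\cap L_i|}{2}$ and $\binom{|A\cap D_i|}{2}$ and bound each factor by $\iota(A)/2$ per vertex. The two computations are equivalent via the handshake identity, and yours is slightly more explicit.
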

\begin{proof}
	Fix $A \subset V$.
	For each $v \in A$, $v$ has at most $2 \iota(A)$ many neighbours, thus the total number of edges from $v$ to vertices of $A$ is at most $(2\iota(A)|A|)/2$, where the division by $2$ comes from the fact that each edge is counted twice.
\end{proof}

Given $v,w\in V$ we set $v+w$ to be component-wise addition modulo $n-1$.
Further, for $A\subset V$ and $v\in V$ we put $v+A=\{v+w:w\in A\}$.
The next lemma follows easily from the definition and thus we leave the proof out.

\begin{lemma}\label{lem:combinatorial}
	Let $A,B \subset V$ then
	\[
		\sum_{v\in V} |(v+A)\cap B| = |A||B|, \quad \sum_{v\in L_0} |(v+A)\cap B| =  \sum_{i=0}^{n-1} |A\cap L_i||B\cap L_i|.
	\]
\end{lemma}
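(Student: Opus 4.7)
The plan is to expand both sides using indicator functions and interchange the order of summation, exploiting the fact that $v \mapsto v+a$ is a bijection of $V$ for each fixed $a$.

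For the first identity, I would write
\[
    \sum_{v\in V} |(v+A)\cap B| = \sum_{v\in V}\sum_{a\in A} \1[v+a\in B] = \sum_{a\in A}\sum_{v\in V}\1[v+a\in B].
\]
For each fixed $a$, the map $v\mapsto v+a$ is a bijection of $V$, so the inner sum equals $|B|$, giving $|A||B|$.

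For the second identity, the key observation is that if $v\in L_0$ then $v=(x,0)$ for some $x$, and for any $a=(a_1,a_2)$ we have $v+a=(x+a_1,a_2)$, which lies in $L_{a_2}$. As $v$ ranges over the $n$ elements of $L_0$, the first coordinate of $v+a$ ranges over every possible value exactly once, so $v+a$ traces out all of $L_{a_2}$. Therefore
\[
    \sum_{v\in L_0}\1[v+a\in B] = |B\cap L_{a_2}|.
\]
Substituting and then partitioning $A$ according to which row each of its elements lies in yields
\[
    \sum_{v\in L_0}|(v+A)\cap B| = \sum_{a\in A}|B\cap L_{a_2}| = \sum_{i=0}^{n-1}|A\cap L_i|\,|B\cap L_i|,
\]
as required.

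There is no real obstacle here; the only thing to be careful about is making sure the translation action on $V$ genuinely acts as a bijection, which is immediate from the definition of componentwise modular addition. The second identity could equivalently be viewed as a Fubini argument once one notes that translations by elements of $L_0$ preserve rows, which is essentially why the sum decouples row by row.
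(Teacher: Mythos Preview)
Your proof is correct. The paper itself omits the proof entirely, stating only that the lemma ``follows easily from the definition,'' so your indicator-function expansion together with the bijectivity of translation is exactly the intended routine argument.
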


\subsection{Poissonian construction and the cyclic random walk}\label{sec:crw}
We think of the interchange process as a Poisson point process on the edges $E$ of $H$ and construct it as follows.
Consider a Poisson point process $\mathcal M$ on $E\times [0,\infty)$ with intensity measure given by $|E|^{-1}\#(\cdot)\otimes{\rm Leb}$ where $\#(\cdot)$ is the counting measure and ${\rm Leb}$ is the Lebesgue measure.
For each $t \geq 0$ we set
\[
	\mathcal B_t:=\{(e,z)\in \mathcal M: z \leq t\}
\]
to be the restriction of $\mathcal M$ to $E \times [0,t]$.

We call each $b \in \mathcal B_t$ a \emph{bridge} and call $\{v\}\times [0,t]$ the \emph{bar} at vertex $v$.
We think of a bridge $((v,w),z)$ as going across two bars from vertex $v$ to vertex $w$ at time $z\in [0,t]$.
We let $\mathcal B_t(A,B)$ be the set of bridges $b=(e,t)\in\mathcal B$ such that $e\in E(A,B)$.
In cases when $t$ is fixed we will often drop it from the notation by write $\mathcal B=\mathcal B_{t}$.

Let $v \in V$, then we obtain $\sigma_t(v)$ from the following procedure (see Figure~\ref{fig:crw}).
We start at $(v,0)\in H\times [0,t]$ and follow the bar $\{v\}\times [0,t]$ until we reach the first bridge $(e,z)\in\mathcal B_t$, if it exists, such that $e=(v,w)$ for some $w \in V$.
Then we jump to $(w,z)$ and then again follow the interval $\{w\}\times [z,t]$ until the next bridge.
Repeating this procedure, we eventually end up at some $(v',t)\in H\times [0,t]$ and we have that $\sigma_t(v)=v'$.

\begin{figure}
	\centering
	\begin{subfigure}[b]{0.45\textwidth}
		\includegraphics[width=\linewidth]{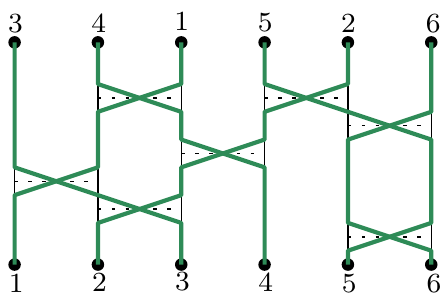}
	\end{subfigure}
	\hfill
	\begin{subfigure}[b]{0.45\textwidth}
		\includegraphics[width=\linewidth]{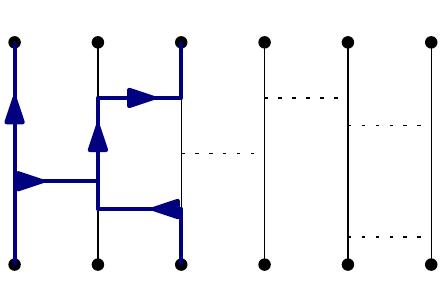}
	\end{subfigure}
	\caption{The dotted lines represent the bridges $\mathcal B$.
		The picture on the left shows how to obtain $\sigma_t$, the labels at the bottom are the labels of the vertices and at the top we have put where they map to under $\sigma_t$.
		The figure on the right is the path of the CRW which is in blue and the direction that the CRW travels is indicated by the arrows.}\label{fig:crw}
\end{figure}

Now we present the \emph{cyclic random walk} (CRW), introduced by~\textcite{MR2042369}, which will explore the set of bridges $\mathcal B$ in a convenient way.
The CRW $\mathcal X=(\mathcal X_s:s\geq 0)$ is a continuous time process which takes values in $V\times [0,t]$ and is defined as follows (see Figure~\ref{fig:crw}).
Initially we start at a point $\mathcal X_0=(v,z) \in V\times [0,t]$.
The CRW moves upwards on the bar of the vertex $v$, starting at height $z$, at unit speed, until it encounters a bridge.
It does this periodically, so that if it gets to height $t$, then the CRW goes to the bottom of the bar, at height $0$.
If a bridge has been encountered, then the CRW jumps to the other end of the bridge and repeats the same procedure.
We will be assuming that $\mathcal X$ is a right continuous function.

Notice that the CRW is periodic.
For example in the case there are no bridges coming out of $v$, $\mathcal X$ will be given by $\mathcal X_s=(v,z+s \mod t)$.
In general, once $\mathcal X$ reaches the point $(v,z)$ again (which it will do so in a finite time), then it will repeat itself.
Crucially, the paths of $\mathcal X$ encode the permutation $\sigma_t$.
Indeed note that the vertices $\orb_t(v)$ in the cycle of $v$ can be obtained from $\mathcal X$ as follows:
\[
	\orb_t(v)=\{w: \mathcal X_{s}=(w,t) \, \text{ for some }s \geq 0\}.
\]
We are interested in the trace of $\mathcal X$ which we define as
\[
	\mathcal Z_s:=\{v\in V: \mathcal X_{s'}=(v,z)\text{ for some }s'\leq s\text{ and }z\in [0,t]\}.
\]

For $x>0$ we let $T_x:=\inf\{s \geq 0: |\mathcal Z_s|\geq x\}$ where we use the convention that $\inf \emptyset=\infty$.
We define $(Z_k: k=0,1,\dots)$ and  $(X_k: k = 0,1,\dots)$ by $Z_k:=\mathcal Z_{T_k}$ and
\[
	X_k:=\begin{cases}
	Z_1 & \text{ if }k=1\\
	Z_{k}\backslash Z_{k-1} & \text{ if } k \geq 2.
	\end{cases}
\]
Note that $X_k = \emptyset$ if $T_k = \infty$.
When $X_k\neq \emptyset$ we will often ignore the fact that $X_k$ is a set and write $X_k=v$ instead of $X_k=\{v\}$.
In some cases $t$ or the starting position of $(v,z)$ will be important and we denote this by $\mathcal Z_s(v,z), T_k(v,z)$ etc.

Let $\mathcal F=(\mathcal F_s:s \geq 0)$ denote the natural filtration of the CRW $\mathcal X$.
We also set  $\mathcal G_k:=\mathcal F_{T_k}$.
It is important to note that $\mathcal G_k$ is finer than the natural filtration of $X$ as it records all the bridges that $\mathcal X$ has crossed, prior to and including time $T_k$.

The four processes $\mathcal X$, $\mathcal Z$, $X$ and $Z$ are all measurable with respect to the set of bridges $\mathcal B$.
This means that we only have one source of randomness, nevertheless these processes are a useful tool to explore the set of bridges $\mathcal B$.

The homogeneity of the Poisson point process gives that
\[
	(\mathcal{X}_s(v,z),s\geq 0) \overset{d}{=}  ((v,z)+\mathcal{X}_s(0,0), s\geq 0)
\]
where $+$ is applied component-wise.
In particular this implies similar equalities in distribution for $\mathcal{X}, \mathcal{Z}, X,Z$ and $T$, for example
\[
	(Z_k(v,z), k=0,1\dots) \overset{d}{=}(v+Z_k(0,0), k=0,1,\dots).
\]

\section{Isoperimetric properties of cycles}\label{sec:isoperimetry}
In this section we fix $t>0$ and in most cases we drop it from the notation.

The goal of this section is to show that if $\liminf_{n \to\infty}\P(T_k<\infty)>0$, then $Z_k$ has good isoperimetry, where we think of good isopermitery as saying that $\iota(Z_k)\leq \log^2 n$ with high probability.
We first give the heuristics of this section.

\subsection*{Heuristics of the section}

Imagine first that after $\ell$ steps the CRW is restarted from a point $v$ chosen uniformly from~$V$.
Assuming that $\ell \ll n$ most of the graph has not been explored thus after the restart, then if $T_{k-\ell}(v,0)<\infty$, the next $k-\ell\ll n$ steps of the restarted CRW looks like an independent CRW started from a uniformly chosen vertex.
The CRW started from a uniformly chosen point is very unlikely to intersect any given line, say $L_0$, thus we see that  on $T_{k-\ell}(v,0)<\infty$ after the restart, the original CRW will not hit $L_0$ again with high probability.

Next step is deciding after which event does the CRW end up in a roughly uniform vertex.
We choose it to be an L shaped jump, i.e.\ $X_{\ell}$ to $X_{\ell+1}$ is a horizontal jump and $X_{\ell+1}$ to $X_{\ell+2}$ is a vertical jump.
We observe that each time the CRW visits $L_0$ it has a positive chance of jumping our via an L shaped jump and further doing an excursion of length $k-\ell$ without touching $L_0$.
This leads to the conclusion, stated in Lemma~\ref{lemma:intersect_line}, that the tail of $|Z_k \cap L_0|$ has exponential decay.

This result is strong enough to allow use of a union bound to show that with a very small probability the intersection of $Z_k$ with any line or column is small.
Finally in Lemma~\ref{lem:ispoerymetryAtGivenTime} we will transfer this result to cycles of the permutation $\sigma_t$ and in Lemma~\ref{lemma:T_k_to_orb} we restate them in a form applicable in further sections.

\bigskip

We begin with the central lemma to our argument.
Instead of conditioning on the L shaped jump mentioned above, we let $\mathcal L_{\ell+2}$ denote a suitable event which we will later fix to be the L shaped jump.

\begin{lemma}\label{lemma:one_disjoint_line}
	Let $\ell,k \in \N$ with $\ell+2\leq k$,  and let $\mathcal L_{\ell+2}$ be an event which is $\mathcal G_{\ell+2}$ measurable such that
  \[
    \{X_{\ell+2}\notin L_0\}\cap \{T_{\ell+2}<\infty\} \subset \mathcal L_{\ell+2}.
  \]
  Then
	\begin{align*}
		\P(|(Z_{k} & \backslash Z_{\ell})\cap L_0|\leq 1 | \mathcal G_{\ell})                                                                                                                \\
		           & \geq \left(\P(T_k < \infty) - k(\ell+n) \max_{v\in V}\P(X_{\ell+2}=v|\mathcal L_{\ell+2};\mathcal G_\ell) -\frac{k}{n-1}\right)\P(\mathcal L_{\ell+2}|\mathcal G_\ell).
	\end{align*}
\end{lemma}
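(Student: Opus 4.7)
The plan is to use the strong Markov property of the Poisson bridge process at the random time $T_{\ell+2}$ and couple the continuation of the CRW past this time with a fresh, independent CRW started at the random vertex $X_{\ell+2}$. The key reduction is that on $\{X_{\ell+2}\notin L_0\}\cap\{T_{\ell+2}<\infty\}\subset\mathcal L_{\ell+2}$, the only element of $(Z_{\ell+2}\setminus Z_\ell)\cap L_0$ can be $X_{\ell+1}$, so $|(Z_{\ell+2}\setminus Z_\ell)\cap L_0|\leq 1$ automatically; it then suffices to lower bound the probability that the continuation past $T_{\ell+2}$ reaches $k-\ell-2$ further new vertices none of which lies in $L_0$.

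Let $\mathcal Y$ be an independent CRW started at $(0,0)$ on an independent PPP, and write $\tilde Z_k$ for the analogue of $Z_k$ for $\mathcal Y$. By the strong Markov property of the PPP conditional on $\mathcal G_{\ell+2}$ together with translation invariance of the Poisson process, the continuation of $\mathcal X$ past time $T_{\ell+2}$ can be coupled with $X_{\ell+2}+\mathcal Y$ so that their new-vertex sequences agree for as long as $X_{\ell+2}+\mathcal Y$ avoids the already-explored region. Hence the desired event holds whenever $X_{\ell+2}+\mathcal Y$ avoids $Z_\ell\cup L_0$ in its first $k-\ell-2$ new vertex visits and $T_{k-\ell-2}^{\mathcal Y}<\infty$, where $\P(T_{k-\ell-2}^{\mathcal Y}<\infty)\geq\P(T_k<\infty)$ by monotonicity.

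The two error terms arise from bounding the bad events. For any $w\in V$, conditioning on $X_{\ell+2}$ and using translation invariance gives
\begin{equation*}
\P(w\in X_{\ell+2}+\tilde Z_k\mid\mathcal L_{\ell+2},\mathcal G_\ell)=\sum_v\P(X_{\ell+2}=v\mid\mathcal L_{\ell+2},\mathcal G_\ell)\,\P(w-v\in\tilde Z_k)\leq k\max_v\P(X_{\ell+2}=v\mid\mathcal L_{\ell+2},\mathcal G_\ell),
\end{equation*}
using $\sum_u\P(u\in\tilde Z_k)=\E|\tilde Z_k|\leq k$. A union bound over the at most $\ell+n$ vertices of $Z_\ell\cup L_0$ produces the $k(\ell+n)\max_v\P(\cdot)$ term. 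For a sharper bound on the $L_0$ portion: since on $\{X_{\ell+2}\notin L_0\}$ the shifted row $L_0-X_{\ell+2}$ equals $L_j$ for some $j\neq 0$, the vertical translation symmetry of $\mathcal Y$'s law gives $\E[|\tilde Z_k\cap L_j|]\leq k/(n-1)$, yielding the $k/(n-1)$ term.

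The main obstacle is making the strong Markov / fresh-PPP coupling rigorous at the random time $T_{\ell+2}$. The ``already explored'' region is a nontrivial subset of $V\times[0,t]$ whose boundary depends intricately on the trajectory of $\mathcal X$, and one must carefully show that the unexplored bridges form a fresh PPP conditional on $\mathcal G_{\ell+2}$ and that the coupling with $X_{\ell+2}+\mathcal Y$ fails exactly when the shifted CRW enters this region.
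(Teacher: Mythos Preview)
Your overall strategy matches the paper's: couple the continuation of $\mathcal X$ past $T_{\ell+2}$ with a fresh CRW, and show the desired event holds whenever the fresh CRW avoids a certain bad set. However, there is a genuine gap in your identification of that set.

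The coupling with a fresh CRW $X_{\ell+2}+\mathcal Y$ can only be maintained while the fresh walk stays on bars that are \emph{entirely unexplored} by $\mathcal X$ up to time $T_{\ell+2}$. Those are the bars of vertices in $Z_{\ell+1}^c$, not $Z_\ell^c$: the bar of $X_{\ell+1}$ has been partially traversed, so its bridges are no longer fresh. Thus the set the fresh walk must avoid is $Z_{\ell+1}\cup L_0$, not $Z_\ell\cup L_0$. The missing vertex $X_{\ell+1}$ is \emph{not} $\mathcal G_\ell$-measurable, so your union-bound argument (which treats the bad set as fixed given $\mathcal G_\ell$) does not apply to it. This is precisely where the $k/(n-1)$ term originates in the paper: one bounds $\P\bigl(Z'_k(\mathcal X_{T_{\ell+2}})\ni X_{\ell+1}\bigr)$ separately, using that $X_{\ell+1}$ and $X_{\ell+2}$ are adjacent (same row or same column) and then averaging via the symmetry under permutations of that row or column. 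Your attribution of the $k/(n-1)$ term to a sharper bound on the $L_0$ portion is therefore a misreading of the statement; the $kn\max_v\P(\cdot)$ part already accounts for $L_0$, and the extra $k/(n-1)$ is the price for $X_{\ell+1}$.

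On your stated ``main obstacle'': the paper resolves it not by invoking a strong Markov property but by an explicit resampling. With $A=(Z_{\ell+1}\cup L_0)^c$, it observes that under the conditioning the bridges $\mathcal B(A,A^c)$ form a thinned Poisson process plus the single entry bridge to $X_{\ell+2}$; resampling the thinned bridges and removing that one bridge yields $\tilde{\mathcal B}$ with the unconditional law, while $\tilde{\mathcal B}(A,A)=\mathcal B(A,A)$ remains untouched and independent of $\mathcal G_{\ell+2}$. One then argues directly that on $\{\tilde T_k<\infty;\ \tilde Z_k\subset A\}$ the two walks coincide step by step. This sidesteps the need to describe the ``already-explored region'' as a subset of $V\times[0,t]$.
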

\begin{proof}
	Until further mention, we work conditionally on $\mathcal G_{\ell+2}$.
	Fix $D \subset V$ and $(v,z) \in D^c\times [0,t]$.
	We additionally condition on $\{Z_{\ell+1}=D;\mathcal X_{T_{\ell+2}}=(v,z); T_{\ell+2}<\infty\}$.
	We can assume that $\P(\mathcal L_{\ell+2}|\mathcal G_\ell)>0$ then conditioning is well-defined (note that if $\P(\mathcal L_{\ell+2}|\mathcal G_\ell)=0$ there is nothing to prove).

	Let $A=(D \cup L_0)^c$.
	Let us analyse the set of bridges $\mathcal{B}(A,A^c)$.
	Under our conditioning $\mathcal{B}(A,A^c)$ is a thinned Poisson point process with one additional bridge $b$, the one ending at $(v,z)$.
	Indeed, the path of $\mathcal X$ until time $T_{\ell+1}$ excludes certain bridges to be present in $\mathcal B(A,A^c)$.
	Let us resample missing bridges, remove $b$ and finally denote the result by $\tilde{\mathcal B}$.
  The resulting set of bridge $\tilde{\mathcal B}$ is a Poisson point process of bridges which has the same law as the unconditional law of $\mathcal B$.
  Moreover $\tilde{\mathcal B}(A,A)=\mathcal B(A,A)$ which is independent of $\mathcal G_{\ell+2}$. We denote the CRW using this set of bridges and starting from $(v,z)$ by $\tilde {\mathcal X}$.
	Likewise, other quantities associated with this CRW will be decorated with a tilde.

	Let
	\[
		\mathcal{E}=\mathcal E(D,(v,z)):=\{ \tilde{T}_k<\infty; \tilde{Z}_k \subset A\}.
	\]
	Then we claim that on the event $\mathcal E$, the paths $(\tilde{\mathcal{X}}_s: s \leq \tilde T_k)$ and $(\mathcal{X}_{T_{\ell+2}+s}: s \leq \tilde T_k)$ agree.
	Indeed, suppose for a contradiction that this is not true and let
	\[
		\tau = \inf\{u \geq 0: \tilde{\mathcal {X}}_u \neq \mathcal X_{T_{\ell+2}+u}\}.
	\]
	be the first time they disagree so that $\tau<\tilde T_k$.
	Let $(w,z_0)=\tilde{\mathcal X}_{\tau-}$.
	Clearly $w\in A$.
	Suppose first that $(w,z_0)\neq (v,z)$, then there exists a bridge from $(w,z_0)$ to $A^c$ in $\tilde{\mathcal{B}}$.
	This however contradicts the event $\{\tilde{Z}_k \subset A\}$.
	Suppose now that $(w,z_0)=(v,z)$, then the CRW $\tilde{\mathcal X}$ closes into a cycle and behaves periodically after $\tau$, contradicting the event $\{\tilde{T}_k<\infty\}$.

	Since the paths $(\tilde{\mathcal{X}}_s: s \leq \tilde T_k)$ and $(\mathcal{X}_{T_{\ell+2}+s}: s \leq \tilde T_k)$ agree on the event $\mathcal E$, it follows that
	\[
		\mathcal E \subset \{T_k<\infty; (Z_k\backslash Z_{\ell+1})\subset A\} \subset \{|(Z_k\backslash Z_\ell)\cap L_0| \leq 1\}.
	\]
	Hence it suffices to find a suitable lower bound for the probability of $\mathcal E$.

	Notice that $\mathcal E$ is $\tilde{\mathcal B}$ measurable and hence is independent of the $\sigma$-field $\mathcal G_{\ell+2}$ as well as the conditioning $\{Z_{\ell+1}=D;\mathcal X_{T_{\ell+2}}=(v,z);T_{\ell+2}<\infty\}$.
	Thus
	\[
		\P(\mathcal E | \mathcal G_{\ell+2};\{Z_{\ell+1}=D;\mathcal X_{T_{\ell+2}}=(v,z);T_{\ell+2}<\infty\}) = \P(\mathcal E(D,(v,z))).
	\]
	By a union bound have that
	\[
		\P(\mathcal E(D,(v,z)))  \geq \P(T_k<\infty) - \P(Z_k(v,z)\cap A^c \neq \emptyset).
	\]
	Now let $Z'$ be an independent copy of $Z$, then we have just shown that
	\begin{align*}
		&\P(|(Z_{k} \backslash Z_{\ell})\cap L_0|\leq 1 | T_{\ell+2}<\infty;\mathcal G_{\ell+2}) \\
    &\qquad \geq \P(T_k<\infty) - \P(Z'_k(\mathcal X_{T_{\ell+2}})\cap (Z_{\ell+1}\cup L_0) \neq \emptyset|T_{\ell+2}<\infty;\mathcal G_{\ell+2}).
	\end{align*}
	Using the law of total expectation we see that
	\begin{align*}
		\P(|(Z_{k} \backslash Z_{\ell})\cap L_0|\leq 1 | \mathcal L_{\ell+2};\mathcal G_{\ell}) & \geq \E\bigg[\P(|(Z_{k} \backslash Z_{\ell})\cap L_0|\leq 1 | T_{\ell+2}<\infty; \mathcal G_{\ell+2})\1_{\mathcal L_{\ell+2}} \big| \mathcal G_{\ell}\bigg] \\
		& \geq \left(\P(T_k<\infty) - \P(Z'_k(\mathcal X_{T_{\ell+2}})\cap (Z_{\ell+1}\cup L_0) \neq \emptyset|\mathcal L_{\ell+2};\mathcal G_{\ell})\right)\P(\mathcal L_{\ell+2}|\mathcal G_\ell)
	\end{align*}
	and hence it suffices to show the claimed upper bound for $\P(Z'_k(\mathcal X_{T_{\ell+2}})\cap (Z_{\ell+1}\cup L_0)=\emptyset|\mathcal L_{\ell+2};\mathcal G_\ell)$.

	For convenience let $\mathbf P(\cdot):=\P(\cdot|\mathcal L_{\ell+2};\mathcal G_\ell)$, then
	\begin{multline}\label{eq:decomposition}
		\mathbf{P}\left(Z'_k(\mathcal{X}_{T_{\ell+2}})\cap (Z_{\ell+1}\cup L_0)\neq \emptyset\right) \\
		\leq \mathbf{P}\left(Z'_k(\mathcal{X}_{T_{\ell+2}})\cap (Z_{\ell}\cup L_0) \neq \emptyset\right) +\mathbf{P}\left(Z'_k(\mathcal{X}_{T_{\ell+2}})\cap \{X_{\ell+1}\} \neq \emptyset\right).
	\end{multline}
	For the first term, using Markov's inequality and the fact that $Z'_k(v,z)$ has the same distribution as $v+Z'_k(0,0)$, which we shortcut to $v+Z'_k$,
	\begin{align}\label{eq:intersectionSizeCalculations}
		\mathbf{P}\left(Z'_k(\mathcal{X}_{T_{\ell+2}})\cap (Z_{\ell}\cup L_0) \neq \emptyset\right) & \leq \mathbf{E}\left(|Z'_k(\mathcal{X}_{T_{\ell+2}})\cap (Z_{\ell}\cup L_0)|\right) \\
		& =\sum_{v\in V} \mathbf{P}({X}_{{\ell+2}}=v )\mathbf{E}\left(|v+Z'_k\cap (Z_{\ell}\cup L_0)|\right) \nonumber\\
		& \leq \max_{v\in V} \mathbf{P}({X}_{{\ell+2}}=v) \sum_{v\in V} \mathbf{E}\left(|v+Z'_k\cap (Z_{\ell}\cup L_0)|\right).\nonumber
	\end{align}
	Lemma~\ref{lem:combinatorial} gives that
	\begin{equation*}
		\mathbf{E}\left(\sum_{v\in V}|(v+Z'_k)\cap (Z_{\ell}\cup L_0)|\right) = \mathbf{E}\left(|Z'_k||(Z_{\ell}\cup L_0)|\right)\leq k( \ell +n)
	\end{equation*}
	and hence
	\[
		\mathbf{P}\left(Z'_k(\mathcal{X}_{T_{\ell+2}})\cap (Z_{\ell}\cup L_0) \neq \emptyset\right) \leq k(\ell+n)\max_{v\in V} \mathbf{P}( {X}_{\ell+2}=v).
	\]

	Now we bound the second term of~\eqref{eq:decomposition}.
	We distinguish two cases depending on if $X_{\ell+1}$ and $X_{\ell+2}$ are in the same row/column or not.

	For the first case we assume without loss of generality that $X_{\ell+1}$ and $X_{\ell+2}$ are in the same row $L_i$.
	In this case for any $v \in L_i\backslash \{(0,0)\}$,
	\[
		\mathbf{P}\left(Z'_k(\mathcal{X}_{T_{\ell+2}})\cap \{X_{\ell+1}\} \neq \emptyset\right) = \P(Z_k(v,0)\cap \{(0,0)\}\neq \emptyset),
	\]
	Applying Lemma~\ref{lem:combinatorial} we obtain
	\begin{align*}
		\mathbf{P}\left(Z'_k(\mathcal{X}_{T_{\ell+2}})\cap \{X_{\ell+1}\}=\emptyset\right) & = \frac{1}{n-1} \sum_{v\in L_0\setminus \{(0,0)\}} \E[|Z_k(v,0)\cap \{(0,0)\}|] \\
		& \leq \frac{1}{n-1}  \E\left[\sum_{v\in L_0} |(v+Z_k)\cap \{(0,0)\}|\right]\\
		& = \frac{1}{n-1} \E[|Z_k \cap L_0|]\\
		& \leq \frac{k}{n-1}.
	\end{align*}
The second case follows similarly to \eqref{eq:intersectionSizeCalculations} and is skipped. The proof is thus concluded.
\end{proof}

The quality of the estimate provided by Lemma~\ref{lemma:one_disjoint_line} depends on the choice of the event $\mathcal{L}_{\ell+2}$.
We will set the event $\mathcal L_{\ell+2}$ to be the aforementioned L shaped jump, that is, we want it to be the event that $X_{\ell}$ and $X_{\ell+1}$ are on the same column, and $X_{\ell+1}$ and $X_{\ell+2}$ are on the same row.
Additionally we require that $\mathcal X$ discovers $X_{\ell+2}$ by jumping directly from $X_{\ell+1}$.
Note that this is not always the case, $\mathcal X$ can discover $X_{\ell+1}$, then retrace some of its steps back to $X_j$ for some $j \leq \ell$ and then jump to $X_{\ell+2}$ from $X_j$.

To be precise, we let
\begin{align*}
  \mathcal L_{\ell+2}:=&\big\{X_{\ell}\text{ and }X_{\ell+1}\text{ are on the same row, and }X_{\ell+1}\text{ and }X_{\ell+2}\text{ are on the same column}\big\}\\
  &\cap \big\{|\mathcal B(X_{\ell},V)| = 2\big\}\cap \big\{|\mathcal B(X_{\ell},X_{\ell+1})| = 1\big\}\\
  &\cap\big\{|\mathcal B(X_{\ell+1},V)| = 2\big\}\cap \big\{|\mathcal B(X_{\ell+1},X_{\ell+2})| = 1\big\}.
\end{align*}

Notice that on the event $\mathcal L_{\ell+2}$, $X_{\ell+1}$ has two bridges.
Further, $\mathcal X$ arrives at $X_{\ell+1}$ from $X_\ell$ using one of the two bridges and $\mathcal X$ arrives at $X_{\ell+2}$ from $X_{\ell+1}$ by crossing the second bridge.

\begin{lemma}\label{lem:LshapedJumpProperties}  Suppose that $\ell \leq n/2$ and $\mathcal L_{\ell+2}$ is defined as above. Then
  \[
    \P(\mathcal L_{\ell+2}|\mathcal G_\ell) \geq \frac{t^2}{4n^2(n-1)^2}e^{-\frac{5t}{n(n-1)}}
  \]
  and
  \[
    \max_{v \in V}\P(X_{\ell+2}=v|\mathcal G_\ell; \mathcal L_{\ell+2}) \leq \frac{4}{n^2}e^{-\frac{3t}{n(n-1)}}.
  \]
\end{lemma}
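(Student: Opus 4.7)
The plan is to decompose $\mathcal L_{\ell+2}$ into bridge-counting constraints on the bars of $X_\ell$ and $X_{\ell+1}$ and then exploit the Poisson structure. Conditionally on $\mathcal G_\ell$, the only bridge revealed on $X_\ell$'s bar is the arrival bridge on some edge $(X_\ell, X_j)$ with $j\leq\ell-1$; the remaining bridges on $X_\ell$'s bar, together with all bridges on the bars of the (yet undiscovered) vertices $X_{\ell+1}, X_{\ell+2}$, form independent Poisson processes at rate $1/|E|$ per edge. For each candidate pair $(u, v')$ with $u$ a new vertex in $X_\ell$'s row and $v'$ a new vertex in $u$'s column, the event $\{X_{\ell+1}=u,\ X_{\ell+2}=v',\ \mathcal L_{\ell+2}\}$ is equivalent to the following bridge configuration: no additional bridges on the arrival edge, exactly one bridge on each of $(X_\ell,u)$ and $(u,v')$, and no bridges on the other $2(n-1)-2$ edges at $X_\ell$ and at $u$. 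By Poisson independence on disjoint edges, this event has probability of the form $p=(t/|E|)^2\exp(-\alpha t/|E|)$ for a combinatorial constant $\alpha$, uniformly over valid pairs.

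For the lower bound on $\P(\mathcal L_{\ell+2}|\mathcal G_\ell)$, I sum $p$ over all valid pairs $(u,v')$. The hypothesis $\ell\leq n/2$ ensures that $X_\ell$'s row contains at least $n-\ell\geq n/2$ new vertices, and for each such candidate $u$, since $X_\ell$ lies in $u$'s row but not in its column, $u$'s column likewise contains at least $n-\ell\geq n/2$ new vertices. This produces at least $n^2/4$ valid pairs, which together with $|E|=n^2(n-1)$ yields the advertised lower bound after absorbing the resulting constant into the exponent. For the upper bound on $\max_v\P(X_{\ell+2}=v|\mathcal G_\ell;\mathcal L_{\ell+2})$, I fix $v$; unless $v$ is immediately inconsistent with $\mathcal L_{\ell+2}$ (for example $v$ lies in $X_\ell$'s row), the vertex $X_{\ell+1}$ is forced to be the unique vertex at the intersection of the row of $X_\ell$ and the column of $v$. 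Hence $\P(X_{\ell+2}=v,\mathcal L_{\ell+2}|\mathcal G_\ell)$ collapses to the single configuration probability $p$, and dividing by the lower bound on $\P(\mathcal L_{\ell+2}|\mathcal G_\ell)$ from the previous step produces the claimed upper bound.

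The main obstacle is the careful bookkeeping of the exponential rate $\alpha$: the edge $(X_\ell, X_{\ell+1})$ is shared between the constraints on $X_\ell$'s bar and $X_{\ell+1}$'s bar and must not be double-counted, the arrival edge carries a ``no extra bridges'' rather than a ``no bridge'' constraint, and the precise constants have to be tracked so that the clean exponents $5t/(n(n-1))$ and $3t/(n(n-1))$ emerge in the final statement.
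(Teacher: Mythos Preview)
Your approach is essentially the paper's: decompose $\mathcal L_{\ell+2}$ over eligible pairs $(u,v')$, compute the probability of each configuration via Poisson bridge counts, sum over the $\geq n^2/4$ eligible pairs for the first bound, and for the second bound observe that fixing $v'=X_{\ell+2}$ pins $u$ to the unique intersection of $X_\ell$'s row with $v'$'s column, so only one term survives in the numerator.

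One correction to your description: the configuration probability is \emph{not} equal to a single value $p$ uniform over pairs. Conditionally on $\mathcal G_\ell$, the bridges from $X_\ell$ and from the candidate $u$ to vertices in $Z_{\ell-1}$ are \emph{thinned} Poisson (the CRW has already traversed portions of those old bars and found no bridge there), and moreover the mean of the ``no further bridges at $u$'' constraint depends on how many members of $Z_\ell$ happen to share a row or column with $u$. What you actually obtain is a two-sided estimate
\[
  \frac{t^2}{|E|^2}\,e^{-c_1 t/(n(n-1))} \;\leq\; \P\big((X_{\ell+1},X_{\ell+2})=(u,v');\,\mathcal L_{\ell+2}\,\big|\,\mathcal G_\ell\big) \;\leq\; \frac{t^2}{|E|^2}\,e^{-c_2 t/(n(n-1))}
\]
with explicit constants $c_1>c_2$, and it is the ratio of these two bounds that produces the exponential factor in the second claim. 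Your final paragraph already anticipates that the bookkeeping is the crux; just be aware that the thinning, not only the shared edge $(X_\ell,u)$, is part of that bookkeeping.
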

Before presenting the proof, let us remark that in applying the above lemma it will be the case that $t$ has the same order as $n^2$.
For such $t$, Lemma~\ref{lem:LshapedJumpProperties} gives that as $n\to\infty$, $\P(\mathcal L_{\ell+2}|\mathcal G_\ell)$ is bounded below and
\[
  \max_{v \in V}\P(X_{\ell+2}=v|\mathcal G_\ell; \mathcal L_{\ell+2})= O(n^{-2}).
\]
\begin{proof}
	Throughout we work conditionally on $\mathcal G_\ell$, so that every expression appearing throughout this proof is conditionally on $\mathcal G_\ell$.

  We say that a pair of vertices $(v,w)$ are \emph{eligible} if
  \begin{enum}
    \item $v,w \in Z_\ell^c$ with $v\neq w$,
    \item $v$ is on the same row as $X_\ell$,
    \item $w$ is on the same column as $v$.
  \end{enum}
  Let $\mathcal E$ denote set of pairs of eligible pairs of vertices, then notice that $\P((X_\ell,X_{\ell+1})=(v,w);\mathcal L_{\ell+2})>0$ if and only if $(v,w)\in \mathcal E$.

  Fix $(v,w)\in \mathcal E$, then
  \begin{align*}
    &\big\{(X_{\ell+1},X_{\ell+2})=(v,w)\big\}\cap \mathcal L_{\ell+2} \nonumber\\
    &= \big\{|\mathcal B(X_{\ell},V\backslash \{v\})| = 1\big\}\cap \big\{|\mathcal B(X_{\ell},v)| = 1\big\}\cap\big\{|\mathcal B(v,w)| = 1\big\}\cap \big\{|\mathcal B(v,V\backslash\{w,X_\ell\})| = 0\big\}
  \end{align*}
  and note that all of the events on the right hand side are independent of each other.
  We further break down the last event by writing
  \[
    \big\{|\mathcal B(v,V\backslash\{w,X_\ell\})| = 0\big\} = \big\{|\mathcal B(v,Z^c_\ell\backslash\{w\})| = 0\big\}\cap \big\{|\mathcal B(v,Z_{\ell-1})| = 0\big\}
  \]
  both of which are again independent.

  First, $|\mathcal B(X_{\ell},v)|$, $|\mathcal B(v,w)|$ and $|\mathcal B(v,Z_{\ell}^c\backslash\{w\})|$ are independent Poisson random variables where the first two have means $t/|E|$, $t/|E|$ respectively.
  The mean of $|\mathcal B(v,Z_{\ell}^c\backslash\{w\})|$ depends on how many elements of $Z_\ell$ are on the same row as $v$.
  Nevertheless, this mean is at most $2tn/|E|$ and at least $2t(n-\ell-1)/|E|\geq t(n-2)/|E|$ since $\ell \leq n/2$. Thus
  \[
    \frac{t^2}{|E|^2}e^{-t\frac{n+2}{|E|}}\leq \P(|\mathcal B(X_{\ell},v)| = 1;|\mathcal B(v,w)| = 1;|\mathcal B(v,Z_{\ell}^c\backslash\{w\})| = 0)\leq \frac{t^2}{|E|^2}e^{-2t\frac{n}{|E|}}.
  \]

  Next, $|\mathcal B(v,Z_{\ell-1})|$ is a thinned Poisson random variable and is stochastically dominated by a Poisson random variable with mean $2tn/|E|$, hence
  \[
    \P(|\mathcal B(v,Z_{\ell-1})|=0) \geq e^{-\frac{2tn}{|E|}}.
  \]

  Finally, $\mathcal B(X_{\ell},V\backslash \{v\})$ is a thinned Poisson random variable conditioned to be non-empty (since we know that there is at least one bridge to $X_\ell$ from somewhere).
  The segments of bars that have not been visited by $\mathcal X$ is an independent Poisson process, thus $|\mathcal B(X_{\ell},V\backslash \{v\})|-1$ is stochastically dominated by a Poisson random variable with mean $2t(n-1)/|E|$ and hence
  \[
    \P(|\mathcal B(X_{\ell},Z_\ell^c\backslash \{v\})|=1) \geq e^{-2t\frac{n-1}{|E|}}.
  \]
  Combining the estimates together with the fact that $|E|=n^2(n-1)$, we have that for each $(v,w)\in \mathcal E$,
  \begin{equation}\label{eq:v_w_estimate}
    \frac{t^2}{n^4(n-1)^2}e^{-\frac{5t}{n(n-1)}} \leq \P((X_{\ell+1},X_{\ell+2})=(v,w);\mathcal L_{\ell+2}) \leq \frac{t^2}{n^4(n-1)^2}e^{-\frac{2t}{n(n-1)}}.
  \end{equation}

  Since there are at most $\ell \leq n/2$ many elements of $Z_\ell$ on any row or column, an easy counting argument shows that
  \[
    \frac{n^2}{4} \leq |\mathcal E| \leq n^2.
  \]
  Thus summing over~\eqref{eq:v_w_estimate},
  \begin{equation}\label{eq:L_bound}
    \P(\mathcal L_{\ell+2}) \geq \frac{t^2}{4n^2(n-1)^2}e^{-\frac{5t}{n(n-1)}}
  \end{equation}
  which shows the first claim.

  Next, for $w \in V$, there is at most one $v \in V$ such that $(v,w)$ is eligible. Thus dividing~\eqref{eq:v_w_estimate} by~\eqref{eq:L_bound}, we see that
  \[
    \P(X_{\ell+2}=w|\mathcal L_{\ell+2}) \leq \frac{4}{n^2}e^{-\frac{3t}{n(n-1)}}
  \]
  which shows the second claim.
\end{proof}

Now we combine the previous two lemmas to get a bound on the number of times $Z_k$ intersects the row $L_0$.

\begin{lemma}\label{lemma:intersect_line}
	Let $k, M \in \N$ with $k \leq n/2$, then
	\[
		\P(|Z_{k} \cap L_0| \geq M) \leq \left(1- \frac{t^2}{4n^2(n-1)^2}e^{-4t/n^2}\left(\P(T_k < \infty) - \frac{10k}{n} e^{2t/n^2}\right)\right)^{\lfloor M/2\rfloor}.
	\]
\end{lemma}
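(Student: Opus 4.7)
My plan is to iterate Lemma~\ref{lemma:one_disjoint_line} with the L-shaped jump event $\mathcal L_{\ell+2}$, viewing the trajectory $(Z_\ell)_{\ell \geq 0}$ as making a sequence of escape attempts from the row $L_0$.

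First, I would introduce the $(\mathcal G_\ell)$-stopping times $\sigma_j := \inf\{\ell \geq 0 : |Z_\ell \cap L_0| \geq j\}$, with $\sigma_0 := 0$. By construction $\{|Z_k \cap L_0| \geq M\} = \{\sigma_M \leq k\}$, and $\sigma_{j+2} \geq \sigma_j + 2$ because $|Z_\ell \cap L_0|$ can grow by at most one per step of the CRW. Writing $m := \lfloor M/2 \rfloor$, the key observation is that on $\{\sigma_{2(i+1)} \leq k\}$ the escape event $\{|(Z_k \setminus Z_{\sigma_{2i}}) \cap L_0| \leq 1\}$ of Lemma~\ref{lemma:one_disjoint_line} (applied at $\ell = \sigma_{2i}$) must fail, since on that event $|Z_k \cap L_0| \leq 2i + 1 < 2(i+1)$.

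I would then apply Lemma~\ref{lemma:one_disjoint_line} conditionally on $\mathcal G_{\sigma_{2i}}$. Since $\sigma_{2i} \leq \sigma_{2m} \leq k \leq n/2$ on the event of interest, the uniform bounds of Lemma~\ref{lem:LshapedJumpProperties} are applicable and produce a lower bound $p$ on the escape probability that does not depend on $i$. This yields
\[
\P(\sigma_{2(i+1)} \leq k \mid \mathcal G_{\sigma_{2i}}) \leq 1 - p \qquad \text{on } \{\sigma_{2i} \leq k-2\}.
\]
Using the tower property and unrolling the recursion $m$ times, starting from $\P(\sigma_0 \leq k) = 1$, produces $\P(|Z_k \cap L_0| \geq M) \leq (1-p)^m$.

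It then remains to check that $p$ is at least the quantity appearing inside the parentheses in the stated bound. Substituting the two estimates of Lemma~\ref{lem:LshapedJumpProperties} into the conclusion of Lemma~\ref{lemma:one_disjoint_line}, and using $\ell + n \leq 3n/2$ together with $k/(n-1) \leq 2k/n$ (both by $\ell \leq k \leq n/2$), one obtains a bound of the correct shape but with exponents of the form $t/[n(n-1)]$ in place of $t/n^2$. The main obstacle is then the elementary but somewhat fiddly manipulation that compares $5/[n(n-1)]$ against $4/n^2$ in the exponents, absorbing the discrepancy into the constants $4$ in the prefactor $e^{-4t/n^2}$ and $10$ in the correction term $\tfrac{10k}{n}e^{2t/n^2}$. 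Aside from this final algebraic passage, the argument reduces to a clean stopping-time recursion driven by the escape lemma.
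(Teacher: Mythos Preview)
Your approach is essentially the same as the paper's. The paper does not introduce the stopping times $\sigma_j$ explicitly; instead it decomposes the event $\mathcal A_\ell=\{|Z_k\cap L_0|\geq \ell\}$ as a disjoint union over all possible sequences $u_1<\cdots<u_{\ell-2}\leq k$ of the first $\ell-2$ visit times to $L_0$, observes that each $\mathcal A(u_1,\dots,u_{\ell-2})$ is $\mathcal G_{u_{\ell-2}}$-measurable, and applies the bound~\eqref{eq:combo} (the combination of Lemmas~\ref{lemma:one_disjoint_line} and~\ref{lem:LshapedJumpProperties}) inside each term to obtain $\P(\mathcal A_\ell)\leq p\,\P(\mathcal A_{\ell-2})$. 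Your stopping-time recursion is precisely the same argument, packaged via the stopped $\sigma$-algebra $\mathcal G_{\sigma_{2i}}$ rather than an explicit sum over $\{\sigma_{2i}=u_{\ell-2}\}$; the uniform-in-$\ell$ nature of the bound is what makes this legitimate, and you note it. The paper, incidentally, simply asserts the combining step~\eqref{eq:combo} without carrying out the algebra you flag as ``fiddly'', so your proposal is in fact slightly more explicit on that point.
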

\begin{proof}
  Fix $k,M\in \N$ with $k \leq n/2$ and let
  \[
    p:=1- \frac{t^2}{4n^2(n-1)^2}e^{-4t/n^2}\left(\P(T_k < \infty) - \frac{10k}{n} e^{2t/n^2}\right).
  \]
  Then by Lemma~\ref{lemma:one_disjoint_line}, Lemma~\ref{lem:LshapedJumpProperties}  we have that for any $\ell \leq k-2$,
  \begin{equation}\label{eq:combo}
    \P(|Z_k\backslash Z_\ell|>1|\mathcal G_\ell) \leq p.
  \end{equation}

	For a sequence of natural numbers $u_1<\cdots<u_\ell \leq k$, consider the event
	\[
		\mathcal A(u_1,\dots,u_\ell):=\{X_{j}\in L_1, \forall j\in \{u_1,\dots,u_\ell\}; X_{j}\notin L_1, \forall j\in \{1,\dots,u_\ell\}\backslash\{u_1,\dots,u_\ell\}\}.
	\]
	In words, this is the event that the set of times that $X$ intersects $L_1$ at times $u_1,\dots,u_\ell$ before time $u_\ell$.
	In other words, we only observe the first $\ell$ times that $X$ intersects $L_1$.
	Now let
	\[
		\mathcal A_{\ell}:= \bigcup_{u_1<\cdots<u_\ell\leq k}\mathcal A(u_1,\dots,u_\ell).
	\]
	where we note that each term in the union is disjoint.
	The event $\mathcal A_\ell$ is the event that $X$ intersects $L_1$ at least $\ell$ times before time $k$ and hence $\{|Z_k\cap L_1|\geq M\}=\mathcal A_M$.
	We will estimate the probability of $\mathcal A_\ell$ by induction.

	Now notice that if $\{\mathcal A(u_1,\dots,u_{\ell-2}); |(Z_k\backslash Z_{u_{\ell-2}})\cap L_1|\geq 2\}$ holds, then this means that there are is at least two more visits to $L_1$ between the times $u_{\ell-1}$ and $k$.
	Thus
	\begin{align*}
		\P(\mathcal A_\ell) & =\P\left(\bigcup_{u_1<\cdots<u_{\ell-2}\leq k}\mathcal A(u_1,\dots,u_{\ell-2}); |(Z_k\backslash Z_{u_{\ell-2}})\cap L_1|\geq 2 \right)                             \\
		                    & = \sum_{u_1<\cdots<u_{\ell-2}\leq k}\P(\mathcal A(u_1,\dots,u_{\ell-2}); |(Z_k\backslash Z_{u_{\ell-2}})\cap L_1| \geq 2)                                          \\
		                    & = \sum_{u_1<\cdots<u_{\ell-2}\leq k} \E\big[\1_{\mathcal A(u_1,\dots,u_{\ell-2})}\P(|(Z_k\backslash Z_{u_{\ell-2}})\cap L_1|\geq 2 |\mathcal G_{u_{\ell-2}})\big],
	\end{align*}
	where in the second equality we have used disjointness, in the third equality we have used the fact that $\mathcal A(u_1,\dots,u_{\ell-2})$ is $\mathcal G_{u_{\ell-2}}$ measurable.
  Using~\eqref{eq:combo} we get that
  \[
    \P(\mathcal A_\ell) \leq p \sum_{u_1<\cdots<u_{\ell-2}\leq k}\P(\mathcal A(u_1,\dots,u_{\ell-2})) = p \P(\mathcal A_{\ell-2}).
  \]
  The result now follows by induction.
\end{proof}

Roughly speaking the previous lemma states that excursions of the CRW cannot have large intersection with any row or column.
In other words the isoperimetric constant defined in~\eqref{eq:iotaDefinition} is likely to be small.
We we transfer this result to the cycles of the interchange process, which will be useful in other sections.
For $k\in \N$ and $v \in V$ we define
\begin{equation}\label{eq:orbDefinition}
	\orb_t^k(v)=\{\underbrace{\sigma_t\circ\dots\circ\sigma_t}_{\ell}(v): \ell=0,\dots,k\}
\end{equation}
be the first $k$ elements of the cycle of containing $v$.
We write $\orb_t(v)$ for $\orb_t^\infty(v)$.

\begin{lemma}\label{lem:ispoerymetryAtGivenTime}
	Let $k, M\in \N$ set $K=\lceil e^{2t/n^2} k/2\rceil$ and assume that $K \leq n/2$. Then
	\begin{align*}
    \P\left(\max_{v\in V}\iota(\orb^{k}_t(v)) \geq M\right)  \leq &n^4 \left(1- \frac{t^2}{4n^2(n-1)^2}e^{-4t/n^2}\left(\P(T_K < \infty) - \frac{10K}{n} e^{2t/n^2}\right)\right)^{\lfloor M/2\rfloor}\\
    & + n^2 e^{-e^{-t/n^2}k}.
  \end{align*}
\end{lemma}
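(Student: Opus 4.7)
The strategy is to couple the orbit $\orb^{k}_{t}(v)$ with the trace $Z_{K}$ of the cyclic random walk started at $(v,0)$, then apply Lemma~\ref{lemma:intersect_line} and pay a small error for the event that the coupling fails. Start with a union bound: by the translation invariance of the Poisson point process stated at the end of Section~\ref{sec:crw}, the law of $\iota(\orb^{k}_{t}(v))$ does not depend on $v$, so
\[
\P\!\left(\max_{v\in V}\iota(\orb^{k}_{t}(v))\ge M\right)\le n^{2}\,\P\bigl(\iota(\orb^{k}_{t}((0,0)))\ge M\bigr).
\]
Fix from now on the CRW started at $((0,0),0)$.

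On the event $\{\orb^{k}_{t}\subseteq Z_{K}\}$, the monotonicity $\iota(A)\le\iota(B)$ for $A\subseteq B$ (immediate from \eqref{eq:iotaDefinition}) gives $\iota(\orb^{k}_{t})\le\iota(Z_{K})$, so
\[
\P(\iota(\orb^{k}_{t})\ge M)\le \P(\iota(Z_{K})\ge M)+\P(\orb^{k}_{t}\not\subseteq Z_{K}).
\]
For the first term, $\iota(Z_{K})\ge M$ forces $|Z_{K}\cap L|\ge M$ for at least one of the $2n$ rows or columns $L$; a union bound over these, combined with the translation invariance of $Z_{K}$, reduces the estimate to $2n\,\P(|Z_{K}\cap L_{0}|\ge M)$. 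The hypothesis $K\le n/2$ is precisely what Lemma~\ref{lemma:intersect_line} requires, and substituting its bound produces the bracketed factor appearing in the statement. Together with the prefactor $n^{2}$ from the union bound over $v$ this contributes $n^{2}\cdot 2n\le n^{4}$ times the bracket, which is the first summand.

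The remaining task is the second term. The key observation is that $\sigma_{t}^{\ell}((0,0))$ is the vertex on which the CRW performs its $\ell$-th wrap, hence is discovered by the CRW no later than time $\ell t$; therefore $\orb^{k}_{t}((0,0))\subseteq\mathcal Z_{kt}$, and if the CRW has not yet discovered a $K$-th new vertex by time $kt$, i.e.\ $T_{K}>kt$, then automatically $\mathcal Z_{kt}\subseteq Z_{K}$. Thus
\[
\P\bigl(\orb^{k}_{t}\not\subseteq Z_{K}\bigr)\le \P(T_{K}\le kt)=\P(|\mathcal Z_{kt}|\ge K).
\]
Each bridge crossed by the CRW can reveal at most one new vertex, so $|\mathcal Z_{kt}|\le 1+B(kt)$, where $B(kt)$ counts the bridges crossed in $[0,kt]$. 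Since the CRW covers exactly $kt$ units of vertical length and every bar carries an independent Poisson process of bridges of density $2/n^{2}$, the mean of $B(kt)$ equals $2kt/n^{2}$. A Chernoff-type tail bound tuned to the specific threshold $K=\lceil e^{2t/n^{2}}k/2\rceil$ produces $\P(1+B(kt)\ge K)\le e^{-e^{-t/n^{2}}k}$, and a final union bound over the $n^{2}$ starting vertices yields the second summand.

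\textbf{Main obstacle.} The delicate step is the tail bound for $B(kt)$: bridges that the CRW re-crosses on later visits to a bar inflate $B(kt)$ beyond a naive Poisson count, so one must argue via the Poisson independence across bars and careful book-keeping of how many times any single bridge can be hit in time $kt$ in order to retain an exponential bound that, after optimising the Chernoff parameter against the factor $e^{2t/n^{2}}$ built into $K$, yields the precise exponent $e^{-t/n^{2}}k$ appearing in the statement.
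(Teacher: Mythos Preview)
Your reduction to bounding $\P(\iota(Z_K)\ge M)$ plus an error term $\P(\orb^k_t(v)\not\subseteq Z_K)$, and your treatment of the first term via union bounds and Lemma~\ref{lemma:intersect_line}, matches the paper. The two arguments diverge on the error term.

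You invoke the (correct) deterministic inclusion $\orb^k_t(v)\subseteq\mathcal Z_{kt}$ and then seek a Poisson tail bound on $|\mathcal Z_{kt}|$ through the bridge count $B(kt)$. As you yourself flag, this step is incomplete: the CRW's trajectory is determined by the bridges, so $B(kt)$ is not a priori Poisson, and you do not supply the ``careful book-keeping'' you call for. The claim can in fact be salvaged --- before cycle closure the CRW can only learn that bridges are \emph{absent}, never that an uncrossed bridge is present (crossing a known bridge forces immediate closure), so new bridge crossings are dominated by a rate-$2/n^2$ Poisson process --- but this requires a genuine argument, and even then matching the precise exponent $e^{-t/n^2}k$ against the stated $K$ is delicate.

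The paper sidesteps this entirely by working in the discrete index $\ell$ rather than in continuous CRW time. It introduces the event
\[
\mathcal A_\ell=\{|\mathcal B(X_\ell,V)|=1;\,T_\ell<\infty\}\cup\{T_\ell=\infty\}
\]
and observes that when $X_\ell$ carries exactly one bridge the CRW must wrap completely around that bar, forcing $X_\ell\in\orb_t(v)$. The conditional bound $\P(\mathcal A_\ell\mid\mathcal G_\ell)\ge e^{-2t/n^2}$ is just the probability that one bar carries no extra bridges --- a single local Poisson estimate with no self-interaction analysis. Since the orbit elements are discovered by the CRW in their cyclic order, $k$ occurrences of $\mathcal A_\ell$ among $\ell\le K$ force $\orb^k_t(v)\subseteq Z_K$, and Hoeffding for dominated Bernoulli sums finishes. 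The paper's device thus replaces your global bridge count by a sequence of one-vertex events whose conditional law is transparent, which is exactly what your approach is missing.
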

\begin{proof}
	Fix $v\in V$, $k \in \N$ and set $K=\lceil e^{2t/n^2} k/2\rceil$.
	Consider the CRW started at $(v,0)$ and for $\ell \in \N$, define the event
	\[
		\mathcal A_\ell:=\{|\mathcal B(X_\ell,V)|=1;T_\ell < \infty\} \cup\{T_\ell=\infty\}.
	\]
  As in the proof of Lemma~\ref{lem:LshapedJumpProperties}, we have that continually on $\mathcal G_\ell; T_\ell<\infty$, $|\mathcal B(X_\ell,V)|-1$ is stochastically dominated by a Poisson random variable with mean $2t(n-1)/|E|$. Since $|E|=n^2(n-1)$ we have that
	\[
		\P(|\mathcal B(X_\ell,V)|=1|\mathcal G_\ell; T_\ell<\infty) \geq e^{-2t/n^2}.
	\]
	Hence it follows that
	\begin{equation}\label{eq:orb_dominate_this}
		\P(\mathcal A_\ell| \mathcal G_\ell) \geq e^{-2t/n^2}\P(T_\ell<\infty|\mathcal G_\ell) + \P(T_\ell=\infty|\mathcal G_\ell) \geq e^{-2t/n^2}.
	\end{equation}

	Now if $|\mathcal B(X_\ell,V)|=1$, then it follows that $X_\ell \in \orb^\infty_t(v)$.
	On the other hand if $T_\ell=\infty$ then $\orb_t(v)\subset Z_\ell(v,0)$.
	If $\mathcal A_\ell$ occurs at least $k$ times before time $K$, then it must be the case that $\orb^k_t(v)\subset Z_{K}(v,0)$.
	In other words,
	\[
		\P\left(\orb^k_t(v) \subset Z_{K}\right) \geq \P\left(\sum_{\ell=0}^{K} \1_{\mathcal A_\ell} \geq k\right).
	\]

	Next let $\xi_0,\dots,\xi_{K}$ be a sequence of i.i.d. $\{0,1\}$--valued Bernoulli random variables with parameter $e^{-2t/n^2}$.
	Then from~\eqref{eq:orb_dominate_this}, $\sum_{\ell=0}^{K}\1_{\mathcal A_\ell}$ stochastically dominates $\sum_{\ell=0}^{K}\xi_i$ and hence
	\[
		\P\left(\orb^k_t(v) \subset Z_{K}\right) \geq \P\left(\sum_{\ell=0}^{K}\xi_i > k\right) \geq 1-e^{-e^{-t/n^2}k}
	\]
	where the final inequality follows from Hoeffding's inequality.

	Conditionally on the event $\{\orb^k_t(v) \subset Z_{K}\}$ we have
	\[
		\iota(\orb^k_t(v)) \leq \iota(Z_K).
	\]
	Applying Lemma~\ref{lemma:intersect_line} and a union bound we get that
	\[
		\P(\iota(Z_K) \geq M) \leq n^2 \left(1- \frac{t^2}{4n^2(n-1)^2}e^{-4t/n^2}\left(\P(T_K < \infty) - \frac{10K}{n} e^{2t/n^2}\right)\right)^{\lfloor M/2\rfloor}.
	\]
	This lemma now follows by taking a union bound over $v\in V$.
\end{proof}

Next, we no longer think of $t$ as being fixed and write $T^t_k$ to indicate the dependence of $T_k$ on $t$.
In the final lemma of this section we apply what we have shown so far to obtain an estimate uniform in time.

\begin{lemma}\label{lemma:T_k_to_orb}
  Suppose that there exists a constant $c>0$ such that $t\in[c^{-1}n^2, cn^2]$ and let $\Delta=\Delta(n)\geq 0$ be a sequence with the property that $\Delta(n) \leq n/\log n$. For some $k \leq n/\log n$ suppose that
  \[
    \liminf_{n\to\infty}\inf_{s\in[t-\Delta,t]}\P(T^s_k < \infty) >0.
  \]
  Then there exist two constants $\kappa,C>0$ such that
  \[
    \P\left(\sup_{s\in[t-\Delta,t]}\max_{v\in V}\iota(\orb^{k}_s(v)) \geq \log^2 n\right) \leq C e^{-\kappa\log^2 n}.
  \]
\end{lemma}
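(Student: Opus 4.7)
The plan is to reduce the supremum over $s\in[t-\Delta,t]$ to a maximum over the Poisson event times using right-continuity of $s\mapsto\sigma_s$, apply \ref{lem:ispoerymetryAtGivenTime} pointwise, and control the randomness of these event times via the Campbell--Mecke formula for Poisson processes.

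First I would establish a fixed-$s$ bound. Since $\iota(\orb_s^k(v))\leq k+1$ trivially, we may assume $k\geq\log^2 n$. Set $k':=\lceil e^{-2c}k\rceil\leq k$; since $t/n^2\in[c^{-1},c]$, we have $K':=\lceil e^{2s/n^2}k'/2\rceil\leq k$ for $n$ large, and the hypothesis therefore yields $\P(T_{K'}^s<\infty)\geq\P(T_k^s<\infty)\geq c_0>0$ uniformly in $s\in[t-\Delta,t]$. Setting $\alpha:=\lceil k/k'\rceil+1=O(1)$ and $M:=\lfloor\log^2 n/\alpha\rfloor$, \ref{lem:ispoerymetryAtGivenTime} applied with parameters $k'$ and $M$ gives $\P(\max_v\iota(\orb_s^{k'}(v))\geq M)\leq C_1 e^{-\kappa_1\log^2 n}$. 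The inclusion $\orb_s^k(v)\subseteq\bigcup_{i=0}^{\lfloor k/k'\rfloor}\orb_s^{k'}(\sigma_s^{ik'}(v))$ combined with sub-additivity of $\iota$ then forces $\iota(\orb_s^k(v))\leq\alpha M\leq\log^2 n$ on the complementary event, yielding the fixed-$s$ bound $\P(\max_v\iota(\orb_s^k(v))\geq\log^2 n)\leq C_1 e^{-\kappa_1\log^2 n}$.

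Next, because $s\mapsto\sigma_s$ is right-continuous and piecewise constant, with jumps only at the event times $t_1<t_2<\cdots$ of $\mathcal M$ in $[t-\Delta,t]$, one has
\[
\sup_{s\in[t-\Delta,t]}\max_v\iota(\orb_s^k(v))=\max\Bigl(\max_v\iota(\orb_{t-\Delta}^k(v)),\,\max_{j:\,t_j\in[t-\Delta,t]}\max_v\iota(\orb_{t_j}^k(v))\Bigr).
\]
The endpoint is handled directly by the fixed-$s$ bound. For the event-time contribution I would apply the Campbell--Mecke formula: letting $\mathbf P_{!s}$ denote the Palm distribution under which $\mathcal M$ is augmented by one extra bridge at time $s$ on a uniformly chosen edge,
\[
\E\Bigl[\sum_{j:\,t_j\in[t-\Delta,t]}\1_{\max_v\iota(\orb_{t_j}^k(v))\geq\log^2 n}\Bigr]=\int_{t-\Delta}^t\mathbf P_{!s}\bigl(\max_v\iota(\orb_s^k(v))\geq\log^2 n\bigr)\,ds.
\]
The Poisson-thinning and resampling arguments in the proofs of \ref{lemma:one_disjoint_line} and \ref{lem:LshapedJumpProperties} remain valid under $\mathbf P_{!s}$ after absorbing the extra deterministic bridge into the multiplicative constants, giving $\mathbf P_{!s}(\max_v\iota(\orb_s^k(v))\geq\log^2 n)\leq C_2 e^{-\kappa_2\log^2 n}$. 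Integrating over an interval of length $\Delta\leq n/\log n$ and applying Markov's inequality yields the desired $Ce^{-\kappa\log^2 n}$ bound on the event-time contribution.

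The main obstacle is verifying the Palm-robustness of \ref{lem:ispoerymetryAtGivenTime}: its proof relies essentially on independence of the Poisson bridges via the decomposition in \ref{lemma:one_disjoint_line}, and one has to carefully isolate the deterministic extra bridge at time $s$---for instance by re-sampling the bridges not yet visited by the CRW and treating the additional bridge as a boundary effect to be absorbed in the constants $C_i,\kappa_i$---in order to show that only the constants, and not the exponential rate $e^{-\kappa\log^2 n}$, are affected. A cleaner but more wasteful alternative would be to stochastically dominate $\mathbf P_{!s}$ by a slightly higher-intensity Poisson law, inflating only the prefactors in the fixed-$s$ estimate.
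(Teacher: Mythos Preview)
Your route is genuinely different from the paper's and substantially heavier. The paper avoids Palm calculus altogether by a fine deterministic discretisation: it partitions $[t-\Delta,t]$ into $m=\lceil\Delta e^{(\kappa/2)\log^2 n}\rceil$ subintervals $I_i$ of length at most $e^{-(\kappa/2)\log^2 n}$, observes that on each $I_i$ the event $J_i=\{\text{at least two jumps in }I_i\}$ has probability $O(e^{-\kappa\log^2 n})$, and that on $J_i^c$ the supremum over $I_i$ equals the maximum at the two \emph{deterministic} endpoints, to which the fixed-$s$ bound (your first paragraph, which matches the paper) applies directly. A union bound over $m=O(e^{(\kappa/2)\log^2 n})$ intervals then costs only half the exponent. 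This buys exactly what your Campbell--Mecke step is meant to buy, but with no Palm conditioning to verify.

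Your identified obstacle is real, and neither of your proposed fixes is convincing as stated. The stochastic-domination alternative does not work: the event $\{\max_v\iota(\orb^k_s(v))\ge M\}$ is not monotone in the bridge configuration, and in any case a Poisson process of higher intensity almost surely fails to contain a prescribed deterministic point, so there is no inclusion coupling of $\mathbf P_{!s}$ into a higher-rate Poisson law. Re-tracing the CRW lemmas with one extra bridge is possible but laborious. There is, however, a much shorter way to close the gap that you overlooked: under $\mathbf P_{!s}$ the permutation at time $s$ is $\tau\circ\sigma_s$ for a single transposition $\tau=(a,b)$ independent of $\sigma_s$, and an elementary induction shows $\orb^k_{\tau\sigma_s}(v)\subseteq\orb^k_{\sigma_s}(v)\cup\orb^k_{\sigma_s}(a)\cup\orb^k_{\sigma_s}(b)$, whence $\max_v\iota(\orb^k_{\tau\sigma_s}(v))\le 3\max_w\iota(\orb^k_{\sigma_s}(w))$. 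This reduces the Palm estimate to the plain fixed-$s$ bound with threshold $\tfrac13\log^2 n$ and makes your argument go through; but the paper's discretisation is still the cleaner path.
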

\begin{proof}
  Under the assumptions of the lemma we have that by Lemma~\ref{lem:ispoerymetryAtGivenTime} there exists two constant $C,\kappa>0$ such that,
  \begin{equation}\label{eq:sup_outside}
    \sup_{s\in[t-\Delta,t]}\P\left(\max_{v\in V}\iota(\orb^{k}_s(v)) \geq \log^2 n\right) \leq C e^{-\kappa\log^2 n}.
  \end{equation}
  Thus it remains to see how to pull the supremum inside of the probability.

  Set $m:=\lceil \Delta e^{(\kappa/2)\log^2 n}\rceil$ and let $I_1,\dots,I_m$ be any sequence of closed intervals of length $|I_i|\leq e^{-(\kappa/2)/\log^2 n}$ such that $\bigcup_i I_i=[t-\Delta,t]$.
  Then
  \begin{equation}\label{eq:interval_break}
    \P\left(\sup_{s\in[t-\Delta,t]}\max_{v\in V}\iota(\orb^{k}_s(v)) \geq \log^2 n\right) \leq \sum_{i=1}^m \P\left(\sup_{s\in I_i}\max_{v\in V}\iota(\orb^{k}_s(v)) \geq \log^2 n\right).
  \end{equation}
  For each $i \leq m$, let $J_i$ be the event that $\sigma_{t}$ has two or more jumps inside of the interval~$I_i$.
  Since $|I_i|\leq e^{-(\kappa/2)\log^2 n}$, we have that there exists a constant $C'>0$ such that $\P(J_i)\leq C'e^{-\kappa\log^2 n}$.
  Let $a_i=\inf I_i$ and $b_i=\sup I_i$, then on the event $J_i^c$, we have that
  \[
    \sup_{s\in I_i}\max_{v\in V}\iota(\orb^{k}_s(v)) \geq \log^2 n = \max\left\{\max_{v\in V}\iota(\orb^{k}_{a_i}(v)),\max_{v\in V}\iota(\orb^{k}_{b_i}(v)) \right\}.
  \]
  Hence we see that
  \begin{align*}
    \P\left(\sup_{s\in I_i}\max_{v\in V}\iota(\orb^{k}_s(v)) \geq \log^2 n\right)&\leq \P\left(\sup_{s\in I_i}\max_{v\in V}\iota(\orb^{k}_s(v)) \geq \log^2 n ; J_i^c\right) + \P(J_i)\\
    & \leq \P\left(\max_{v\in V}\iota(\orb^{k}_{a_i}(v))\right) + \P\left(\max_{v\in V}\iota(\orb^{k}_{b_i}(v))\right) + C'e^{-\kappa \log^2 n}\\
    & \leq C'' e^{-\kappa\log^2 n}
  \end{align*}
  for some constants $C',C''>0$, where in the final inequality we have used~\eqref{eq:sup_outside}.
  Plugging this into~\eqref{eq:interval_break} we see that
  \[
    \P\left(\sup_{s\in[t-\Delta,t]}\max_{v\in V}\iota(\orb^{k}_t(v))\geq \log^2 n\right) \leq C'' m e^{-\kappa\log^2 n}.
  \]
  The result now follows from the fact that $m=O(e^{(\kappa/2)\log^2 n})$.
\end{proof}

\section{Cycle lengths under good isoperimetry}\label{sec:longCyclesUnderGoodIsoperymetry}

In this section we prove results about the cycle sizes when we assume that we have good isoperimetry.
The arguments are based on adaptations of the techniques in~\cite{MR2754801}.
There are two crucial differences which we make here.
One is that we couple the interchange process with the random graph process at some time $t$, rather than just at $t=0$.
The advantage of this is that the coupling runs for less time which minimises the error terms of the coupling.
The second difference is that we incorporating isopermetry in to the estimates in~\cite{MR2166362}.
This means that our bounds become sharper as we are able to show better bounds on the isoperimetry of the cycles of $\sigma_t$.

Recall \eqref{eq:orbDefinition}, for an interval $I \subset [0,\infty)$ and $k \in \N$ let
\[
	\mathcal I_k(I):=\left\{\sup_{t \in I}\max_{v \in V}\iota(\orb^{k}_t(v)) \leq \log^2 n\right\}
\]
denote the event that any fragment of a cycle of length $k$, started from any vertex, has good isoperimetric properties and further that this holds uniformly for $t \in I$.

We begin with a simple lemma about the probability of splitting into small cycles.

\begin{lemma}\label{lem:intensityOfSplit}  Suppose that for some $k \in \N$,
	\begin{equation}\label{eq:isoperimetricAssumption}
		\max_{v\in V} \iota(\orb_\sigma^k(v)) \leq \log^2 n
	\end{equation}
  and let $e=(v,w)$ be an edge chosen uniformly at random.
  Then for $\ell \geq k$ the probability that a cycle of $\sigma$ is split in $(v,w) \circ \sigma$ into two cycles, one of which has size smaller than $\ell$ is at most
	\[
		 \frac{4\ell}{k n} \log^2n.
	\]
\end{lemma}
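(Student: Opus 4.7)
The plan is to reduce the probability bound to a combinatorial count of ``bad'' edges, then control that count vertex-by-vertex using the isoperimetry hypothesis.

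First, I would fix the permutation $\sigma$ satisfying the hypothesis and let $N_\ell(\sigma)$ denote the number of edges $e = (v,w) \in E$ such that $v,w$ lie in the same cycle of $\sigma$ and applying the transposition $(v,w)$ splits that cycle into two pieces of which at least one has size $< \ell$. Since $e$ is uniform on $E$ and $|E| = n^2(n-1)$, the probability in question is exactly $N_\ell(\sigma)/|E|$, so it suffices to prove $N_\ell(\sigma) \leq 4\ell n(n-1)\log^2 n/k$.

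Next I would count $N_\ell(\sigma)$ by a double sum: for each $v \in V$, let $N_\ell(v)$ be the number of edges incident to $v$ that produce a small split. Writing cycles multiplicatively, the transposition $(v,w)$ with $w = \sigma^j(v)$ (or $w = \sigma^{-j}(v)$) splits the cycle of $v$ into two pieces of sizes $\{j,\,|C_v|-j\}$, and the split is small iff $w$ belongs to
\[
A_v := \{\sigma^i(v): 1 \leq i \leq \ell-1\} \cup \{\sigma^{-i}(v): 1 \leq i \leq \ell-1\}.
\]
Thus $N_\ell(v)$ equals the number of graph-neighbours of $v$ lying in $A_v$, i.e.\ $|A_v \cap L_v| + |A_v \cap D_v|$ where $L_v,D_v$ are the row and column through $v$. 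By definition of $\iota$ this is at most $2\iota(A_v)$, and $2N_\ell(\sigma) = \sum_v N_\ell(v)$.

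The key step is bounding $\iota(A_v)$ using the hypothesis $\iota(\orb_\sigma^k(u)) \leq \log^2 n$ for every $u$. Since $\ell \geq k$, I would cover each of the two ``arcs'' of $A_v$ by at most $\lceil(\ell-1)/k\rceil \leq \ell/k + 1$ consecutive orbits of the form $\orb_\sigma^k(\sigma^{jk}(v))$, which gives $A_v$ as a union of at most $2(\ell/k+1) \leq 4\ell/k$ such orbits (using $\ell \geq k$). Subadditivity of $\iota$ (noted right after \eqref{eq:iotaDefinition}) then yields $\iota(A_v) \leq (4\ell/k)\log^2 n$. Combining:
\[
N_\ell(\sigma) = \tfrac{1}{2}\sum_{v\in V} N_\ell(v) \leq \tfrac{1}{2}\cdot n^2 \cdot 2\iota(A_v) \cdot \text{(max over }v\text{)} \leq 4\ell n^2 \log^2 n/k,
\]
and dividing by $|E| = n^2(n-1) \geq n^2 \cdot n/2$ gives the stated bound (up to absorbing $n/(n-1)$ into the constant).

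The main obstacle is getting the covering of $A_v$ clean enough that the constant $4$ in the statement comes out right, rather than a larger constant. This is a routine bookkeeping issue but requires slight care because we must avoid double-counting the endpoint $v$ and must handle the degenerate case $|C_v| < 2\ell$ (where the two arcs wrap around), in which case $A_v = C_v \setminus\{v\}$ is already covered by a single orbit of length $|C_v|$, trivially satisfying the same bound.
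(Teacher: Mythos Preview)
Your argument is correct and follows essentially the same route as the paper: fix a vertex $v$, observe that the ``bad'' neighbours $w$ are exactly those lying in the two $\ell$-arcs $\orb^\ell_\sigma(v)\cup\orb^{-\ell}_\sigma(v)$ along the cycle, bound the number of such neighbours by $2\iota$ of that set, and then cover the set by $O(\ell/k)$ many $k$-orbits to invoke the hypothesis via subadditivity of $\iota$. The only cosmetic differences are that you package the count globally (summing $N_\ell(v)$ over $v$ and dividing by $2|E|$) whereas the paper conditions on one endpoint and divides by its degree $2(n-1)$, and that your covering uses $\leq 4\ell/k$ pieces while the paper uses $\lceil 2\ell/k\rceil$; this is why you end up with a spare factor of $n/(n-1)$ (equivalently, a constant $8$ rather than $4$), which you already flag---and note that the paper's final inequality $\frac{1}{n-1}\lceil 2\ell/k\rceil\log^2 n \leq \frac{4\ell}{kn}\log^2 n$ itself only holds for $n$ large enough, so the constant bookkeeping is equally loose on both sides.
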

\begin{proof}
  For a given vertex $v \in V$, the number of $w \in V$ such that $(v,w)$ is an edge so that a cycle of $\sigma$ is split in $(v,w) \circ \sigma$ into two cycles, one of which has size smaller than $\ell$, is
  \[
     |\orb^{\ell}_\sigma(v)\cup \orb^{-\ell}_\sigma(v)\cap(D\cup L \setminus \{v\})| \leq  2\iota(\orb^{\ell}_\sigma(v)\cup \orb^{-\ell}_\sigma(v)),
  \]
  where $L,D$ are respectively the row and column containing $v$ and $\orb^{-\ell}$ corresponds to composition of $\sigma^{-1}_t$ in \eqref{eq:orbDefinition}. Then by the sub-additivity of $\iota$ we get that,
	\[
		\iota(\orb^{\ell}_\sigma(v)\cup \orb^{-\ell}_\sigma(v)) \leq \left\lceil \frac{2\ell}{k} \right\rceil \max_{v'\in V} \iota(\orb_\sigma^k(v')) \leq \left\lceil \frac{2\ell}{k} \right\rceil \log^2 n.
	\]

	Now, suppose that $e=(v,w)$ is chosen uniformly edge. Considering $v$ fixed, there are $2(n-1)$ many vertices that are neighbouring $v$.
  Thus we see that the probability that a cycle of $\sigma$ is split in $\sigma\circ (v,w)$ into two cycles, one of which has size smaller than $\ell$ is at most
  \[
    \frac{1}{n-1}\left\lceil \frac{2\ell}{k} \right\rceil \log^2 n \leq \frac{4\ell}{kn}\log^2 n.
  \]
\end{proof}

Next we present a coupling between the interchange process and a random graph process.
Let $t \geq 0$ and consider a process $G^{t}=(G^{t}_s:s \geq 0)$ of random graphs on the vertex set $V$, defined as follows.
Initially $G_0^t$ is a graph whose connected components are precisely the cycles of $\sigma_t$.
There may be several graphs that satisfy this and for our purpose it will not matter which one is chosen.
Next, whenever $(\sigma_{t+s}:s \geq 0)$ swaps a pair of particles across an edge $e$, we add $e$ to the graph process. 

Recall that $V_t(\ell)$ is the vertices of $H$ which belong to cycles of length at least $\ell$.
Let $V^{G}_{t,s}(\ell)$ denote the vertices of $G^{t}_s$ which belong to connected components of size at least $\ell$.
One important property of this coupling is that every cycle of $\sigma_{t+s}$ is contained in a connected component $G^{t}_s$.
Hence it follows that $V_{t+s}(\ell)\subset V^G_{t,s}(\ell)$ for every $t,s\geq 0$ and $\ell \in \N$.
We will now estimate $|V^G_{t,s}(\ell)\backslash V_{t+s}(\ell)|$ using a similar argument to that in~\cite[Lemma 2.2]{MR2166362}.

\begin{lemma}\label{lemma:schramm_estimate}
  Let $t,\Delta \geq 0$ and suppose that $\ell,k \in \N$ are such that $k\leq\ell$. Then
  \[
    \E\left[\sup_{s \in [0,\Delta]}|V^G_{t,s}(\ell)\backslash V_{t+s}(\ell)|\right] \leq \frac{4\ell^2\Delta}{k n} \log^2n + \ell\Delta\,\P(\mathcal I_k[t,t+\Delta]^c).
  \]
\end{lemma}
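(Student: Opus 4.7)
The plan is to introduce $N(s) := |V^G_{t,s}(\ell) \setminus V_{t+s}(\ell)|$, noting that $N(0) = 0$ by construction of $G^t_0$, whose connected components are exactly the cycles of $\sigma_t$. Since $N$ is piecewise constant with jumps only at transpositions,
\[
\sup_{s \in [0, \Delta]} N(s) \le \sum_{s \in [0,\Delta]} [N(s) - N(s-)]_+,
\]
so it suffices to control the positive variation of $N$.

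The first step would be a case analysis of how each transposition affects $N$, classifying by the effect on the cycle structure of $\sigma$ (split versus merge) and on the component structure of $G^t$ (same component versus merging components). Following the bookkeeping of \cite{MR2166362}, the positive contributions to $N$ are essentially driven by \emph{bad splits}: transpositions that split a cycle of $\sigma_{t+s-}$ into two pieces, at least one of which has size $< \ell$. A bad split can decrease $|V_{t+s}(\ell)|$ by at most the size of the original cycle (which is $< 2\ell$ when both halves are small, and $< \ell$ otherwise) while leaving $|V^G_{t,s}(\ell)|$ unchanged, contributing $O(\ell)$ to the positive variation. Cycle merges within a single component only decrease $N$, while component-merges contribute only through slack between cycles and components that was itself created by earlier bad splits. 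Aggregating yields
\[
\sup_{s \in [0, \Delta]} N(s) \le O(\ell) \cdot \#\{\text{bad splits in }[0, \Delta]\}.
\]

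The second step is to convert the intensity bound of Lemma~\ref{lem:intensityOfSplit} into an expected count of bad splits. Under the isoperimetry condition $\max_v \iota(\orb_{\sigma_{t+s-}}^k(v)) \le \log^2 n$, a uniformly random edge produces a bad split with probability at most $\tfrac{4\ell}{kn}\log^2 n$; off the event $\mathcal I_k[t, t+\Delta]$ we use the trivial bound $1$. Because transpositions occur at total rate $1$, integrating in time and taking expectations gives
\[
\E\!\left[\sup_{s \in [0,\Delta]} N(s)\right] \le \ell \int_0^\Delta \!\left(\tfrac{4\ell}{kn}\log^2 n + \P(\mathcal I_k[t, t+\Delta]^c)\right) ds = \tfrac{4\ell^2 \Delta}{kn}\log^2 n + \ell \Delta\, \P(\mathcal I_k[t, t+\Delta]^c),
\]
which is the claimed estimate.

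The main obstacle is the case analysis in the first step, and specifically the handling of component-merges. A naive attempt to show that only bad splits contribute to positive jumps of $N$ is defeated by examples in which a component-merge pulls a preexisting small cycle into a large component, producing a positive jump of $N$ with no split occurring at that instant. The resolution, implicit in the Schramm--Berestycki framework, is to charge such contributions to the earlier bad splits that created the cycle/component slack in the first place, so that the total positive variation is still controlled by $O(\ell)$ times the number of bad splits; the factor of $4$ in Lemma~\ref{lem:intensityOfSplit} leaves enough room to absorb the constant arising from this accounting.
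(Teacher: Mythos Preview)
Your approach via the positive variation of $N(s)=|V^G_{t,s}(\ell)\setminus V_{t+s}(\ell)|$ is genuinely different from the paper's, and the step you flag as ``the main obstacle'' is a real gap that you have not closed. The paper does \emph{not} track increments of $N$ at all. It fixes $s$, takes any $v\in V^G_{t,s}(\ell)\setminus V_{t+s}(\ell)$, and assigns to it a bad split time in $I$: since the cycle $D\ni v$ at time $t+s$ has $|D|<\ell$ but the $G^t_s$-component of $v$ is large, some vertex of $D$ must have had its cycle split during $(t,t+s]$ (otherwise every $G^t_s$-edge incident to $D$ would lie inside $D$, forcing the component to equal $D$). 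Taking the \emph{last} such split time $u$, the piece produced at $u^+$ is contained in $D$ and hence has size $<\ell$, so $u\in I$; moreover each $u\in I$ can be blamed by at most $O(\ell)$ vertices, since the small piece(s) determine which final cycles can point to $u$. This gives $N(s)\le O(\ell)\,|I|$ for every $s$ at once, and then $\E|I|$ is bounded by integrating the rate from Lemma~\ref{lem:intensityOfSplit}. No increment bookkeeping is needed.

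Your route, by contrast, must control positive jumps of $N$ at cross-component merges, and ``charge such contributions to the earlier bad splits'' is an aspiration, not an argument. Concretely: a small component $K_2$ that previously suffered an internal (bad) split carries slack $|K_2\setminus C_2|$; when $K_2$ is absorbed into a large component, $N$ jumps by up to $|K_2|$, and you have produced no potential function or injection preventing a single bad split from being debited once at its occurrence and again at one or more later merges. The references you invoke do not contain this accounting in the form you need. If you want to salvage the increment approach you would have to make the amortization explicit; it is considerably simpler to switch to the paper's direct assignment, which sidesteps the issue entirely.
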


Note that for $k=\log^2 n$, $\P(\mathcal I_k[t,t+\Delta])=1$ and this gives the bound $4 \ell^2 \Delta/n$ which also follows from a straight forward adaptation of~\cite[Lemma 2.2]{MR2166362}. Starting from this bound, we will later see that the term $\ell\Delta\,\P(\mathcal I_k[t,t+\Delta]^c)$ becomes negligible for any $k = o(n)$. In this way we obtain a much better bound, which is crucial for proving our result.

\begin{proof}
  Let $I$ be the set of $s \in [0,\Delta]$ such that $\sigma$ experiences a fragmentation at time $t+s$ which splits a cycle and at least one of the resulting cycles has length less than $\ell$.
  From Lemma~\ref{lem:intensityOfSplit} we obtain that at time $u$ the rate of fragmentations where one piece is smaller than $\ell$ is at most
	\[
		\frac{4\ell }{k n}\log^2n + \1_{\{\mathcal I_k(\{u\})^c\}}.
	\]
  Hence we see that
  \begin{equation}\label{eq:I_schramm}
    \E[|I|]\leq \frac{4\ell\Delta}{k n} \log^2n + \Delta\, \P(\mathcal I_k[t,t+\Delta]^c).
  \end{equation}

  Let $s \in [0,\Delta]$ and suppose that $v \in V^G_{t,s}(\ell)\backslash V_{t+s}(\ell)$.
  Then it follows that the cycle containing $v$ must have fragmented between at some time $u \in [t,t+s]$ producing a cycle of size smaller than $\ell$.

  Consider the maximal time $u \in [t,t+s]$ that the cycle containing $v$ fragments.
  Then at this time $u$, $\sigma$ experiences a fragmentation which splits a cycle into two and at least one of the resulting cycles has length less than $\ell$.
  Hence it follows that $u\in I$ and $|V^G_{t,s}(\ell)\backslash V_{t+s}(\ell)| \leq \ell |I|$.
  Taking supremums and using~\eqref{eq:I_schramm} we obtain the desired result.
\end{proof}

In our applications, we will often know that for certain $\ell\in N$, $V^G_{t,0}(\ell) \geq Cn^2$ for some constant $C>0$.
Given this, we wish to know how long it takes until we see components of size comparable to $n^2$.
We do this in the next lemma by using a sprinkling argument first introduced by \textcite{MR671140}.

\begin{lemma}\label{lemma:sprinkle}
  Let $t \geq 0$ and $\ell\in \N$ such that $\ell\leq n^2$. Then for any constant $\delta\in(0,1)$ and any $s \geq (n^2/\ell)\log n$
  \[
    \lim_{n\to\infty}\P\left(|V^G_{t,s}(\delta n^2/8)| \geq \delta n^2/8 \big| |V^G_{t,0}(\ell)|\geq \delta n^2\right)=1.
  \]
\end{lemma}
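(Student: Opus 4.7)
I plan to use a sprinkling argument. By the independence of the Poisson point process across disjoint time intervals, conditionally on $G^t_0$ the edges appearing in $G^t_s \setminus G^t_0$ form an independent bond percolation on $H$: each edge is present independently with probability $p = 1 - e^{-s/|E|}$. Since $|E| = n^2(n-1)$ and $s \geq (n^2/\ell)\log n$, one has $p \geq \frac{\log n}{2n\ell}$ for $n$ large enough. If $\ell \geq \delta n^2/8$ the conclusion is immediate from the monotonicity $V^G_{t,0}(\ell) \subseteq V^G_{t,s}(\ell)$, so assume $\ell < \delta n^2/8$. Work on the event $\{|V^G_{t,0}(\ell)| \geq \delta n^2\}$, enumerate the components of $G^t_0$ of size at least $\ell$ as $C_1, \ldots, C_m$, and set $A = \bigcup_i C_i$; then $|A| \geq \delta n^2$ and $m \leq n^2/\ell$.

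The strategy is a sequential growth/doubling argument in $T = O(\log(\delta n^2 / \ell)) = O(\log n)$ rounds, obtained by splitting the sprinkling into $T$ independent sub-samplings of probability $p' \approx p/T$ each. Starting from a fixed component $C^*$ of size $\geq \ell$, in each round I expose the sprinkled edges leaving the current cluster $S$ and absorb into $S$ every component $C_j$ touched by such an edge (as well as any new singleton in $V\setminus A$). The central claim, proved via a Chernoff bound, is that while $|S| \leq \delta n^2/4$, the weight $|S|$ at least doubles per round with probability $1 - n^{-\omega(1)}$. For this I invoke the edge-isoperimetric inequality for $H$ (its spectral gap equals $n$, giving $|E(S, V \setminus S)| \geq n|S|/2$ for $|S| \leq n^2/2$), together with the fact that every sprinkled edge from $S$ into $A \setminus S$ absorbs a full $G^t_0$-component of size at least $\ell$. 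After $T$ rounds the cluster has weight $\geq \delta n^2/8$, which (being contained in a single component of $G^t_s$) completes the proof, via a union bound over the rounds.

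The main obstacle is quantifying what fraction of the sprinkled edges leaving $S$ falls into $A\setminus S$ versus $V \setminus A$, since adversarial configurations of $A$ could conceivably route most edges out of $A$. Since $S \subseteq A$ throughout the exploration, the coupling $|S \cap L_i| \leq |A \cap L_i|$ row by row gives the estimate
\[
  \sum_i |A \cap L_i|\cdot|S \cap L_i| \geq \sum_i (|S \cap L_i|)^2 \geq |S|^2/n,
\]
by Cauchy-Schwarz, so at least $2|S|^2/n$ edges of $H$ leave $S$ and end in $A$. The difficulty is that $|S|^2/n$ need not dominate the trivial upper bound $|S|n$ on edges internal to $S$, so to extract a useful lower bound on $|E(S, A\setminus S)|$ the argument must be split into an early regime ($|S| \ll n$, where the sprinkling creates many small-component mergers bootstrapping $|S|$ past $n$) and a late regime ($|S| \gg n$, where the Cauchy-Schwarz bound dominates and clean doubling proceeds). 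Combining these regimes carefully, tracking the conditioning at each round, and union-bounding over the $T$ rounds is the most technical part of the proof.
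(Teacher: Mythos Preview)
Your multi-round doubling scheme is a genuinely different route from the paper's. The paper does not grow a cluster at all; it argues by contradiction via a single union bound. If $|V^G_{t,s}(\delta n^2/8)|<\delta n^2/8$, then the components of $G^t_0$ of size at least $\ell$ can be two-coloured into classes $A,B$, each of mass at least $\delta n^2/4$, with no $G^t_s$-connection between them. There are at most $n^2/\ell$ such components, hence at most $2^{n^2/\ell}$ colourings. For a fixed colouring the paper counts length-two paths from $A$ to $B$ to exhibit a set $D$ of $\Omega(n^2)$ vertices each having $\Omega(n)$ neighbours in both $A$ and $B$; each $v\in D$ fails to link $A$ to $B$ through two sprinkled edges with probability $1-(1-e^{-\Theta(sn/|E|)})^2$, and the product over $D$ is then played against the $2^{n^2/\ell}$ union bound. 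This is short and uses only that \emph{both} sides of the partition are macroscopic, so no ``early regime'' ever arises.

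Your plan, by contrast, has a real gap precisely in that early regime. First, you assert ``$S\subseteq A$ throughout the exploration'' while also proposing that $S$ absorb small components in $V\setminus A$; these are incompatible, and once $S\not\subseteq A$ your Cauchy--Schwarz step collapses. More seriously, the seed component $C^*$ need have \emph{no} edges into $A\setminus C^*$: with $\ell=2$, take $C^*=\{(0,0),(1,0)\}\subset L_0$ and let $A\setminus C^*$ avoid $L_0$, $D_0$ and $D_1$ entirely (this is consistent with $|A|\ge\delta n^2$ for any fixed $\delta<1$). Then every sprinkled edge out of $C^*$ lands in $V\setminus A$ and absorbs fewer than $\ell$ vertices; since $|E(C^*,V\setminus C^*)|=\Theta(n\ell)$ and $p'\approx \log n/(n\ell T)$, you expect only $O(1)$ such edges per round, so after $T=O(\log n)$ rounds $|S|$ has grown by $O(\ell\log n)$, far short of the threshold $|S|\gg n$ you need for the late regime whenever $\ell\ll n/\log n$. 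The bound $|E(S,A)|\ge 2|S|^2/n$ does not help before that threshold either, because it does not dominate $|E(S,S)|$. Growing all large components simultaneously, or averaging over the choice of seed, might rescue the scheme, but as written the doubling claim fails; the paper's bipartition argument sidesteps all of this because it never has to control edges out of a small set.
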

\begin{proof}
  Throughout we will work conditionally on the event $V^G_{t,0}(\ell) \geq \delta n^2$.
  Since $V^G_{t,s}(\ell)\subset V^G_{t,s'}(\ell)$ for $s\leq s'$, it suffices to consider only the case when $s = (n^2/\ell)\log n$.

  The event $\{V^G_{t,s}(\delta n^2/8) < \delta n^2/8\}$ implies that $V^G_{t,0}$ can be partitioned into two sets $A$ and $B$, each of size at least $\delta n^2/4$, such that the vertices in $A$ and $B$ are not connected in $G^t_s$.
  Now consider two fixed sets $A$ and $B$ which partition $V^G_{t,0}$ and let $\mathcal C(A,B)$ be the event that the vertices in $A$ and $B$ are not connected in $G^{t}_s$.

  Let
  \[
    D:=\{v \in V: E(\{v\},A)\geq \delta^2n/64 \text{ and }E(\{v\},B)\geq \delta^2n/64\}
  \]
  be the set of vertices that have at least $\delta^2n/64$ many neighbours in both $A$ and $B$. Notice that
  \begin{align}\label{eq:C_bound}
    \P(\mathcal C(A,B)) &\leq \prod_{v \in D}\P(v\text{ is not connected to }A\text{ or }B)\nonumber\\
    &=  \left(1-\left(1-e^{-s\frac{\delta^2 n}{64 |E|}}\right)^2\right)^{|D|}.
  \end{align}
  Now we bound $|D|$.
  Notice that there at least $\delta^2 n^4 /16$ many paths of length $2$ between $A$ and $B$.
  On the other hand, for every $v \notin D$, there are at most $\delta^2 n^2/32$ many paths of length $2$ between $A$ and $B$ with $v$ as the mid-point.
  Every $v \in D$ can create at most $ 4 n^2$ many paths of length $2$ between $A$ and $B$ with $v$ as the mid-point.
  Hence the total number of paths of length $2$ between $A$ and $B$ is bounded from above by $(\delta^2 n^2/32)|D^c|+4n^2|D|$.
  Combining with the lower bound we get that
  \[
    \frac{\delta^2 n^2}{32}(n^2 - |D|)+4n^2|D| \geq \frac{\delta^2 n^4}{16}
  \]
  and thus there exists a constant $\rho>0$ such that $|D|\geq \rho n^2$.
  Plugging this into~\eqref{eq:C_bound} we see that
  \[
    \P(\mathcal C(A,B)) \leq \left(1-\left(1-e^{-s\frac{\delta^2 n}{64 |E|}}\right)^2\right)^{\rho n^2}.
  \]
 Next notice that there are at most $2^{n^2/\ell}$ many partitions $A$ and $B$ of the set $V^G_{t,0}(\ell)$, hence
  \[
    \P(V^G_{t,s}(\delta n^2/8) < \delta n^2/8) \leq 2^{\frac{n^2}{\ell}}\left(1-\left(1-e^{-s\frac{\delta^2 n}{64 |E|}}\right)^2\right)^{\rho n^2}.
  \]
  Since $s=(n^2/\ell)\log n$ and $|E|=n^2(n-1)$, we have that there exist constants $C,c>0$ such that
  \[
    2^{\frac{n^2}{\ell}}\left(1-\left(1-e^{-s\frac{\delta^2 n}{16 |E|}}\right)^2\right)^{\rho n^2} \leq C \exp\left\{\frac{n^2}{\ell}\log 2 - c\frac{n^2\log n}{\ell}\right\}.
  \]
  Since $\ell\leq n^2$ and $n \to\infty$, we have that the above converges to zero.
\end{proof}

Finally we combine the last two lemmas with Lemma~\ref{lemma:T_k_to_orb} to show a result that will allow us recursively to obtain better bounds on the lengths of cycles.

\begin{lemma}\label{lemma:recursion}
  Suppose that there exists a constant $c>0$ such that $t \in[c^{-1}n^2, cn^2]$. Let $\ell = \ell(n)$ and $\Delta=(n^2/\ell)\log n$ and $k=\min\{\ell,n/\log n\}$ and $\delta>0$ be such that
\[
	\lim_{n\to+\infty} \inf_{s\in[t-2\Delta,t]} \P(V_{s}(\ell)>\delta n^2)=1.
\]
 Then there exists a $\rho>0$ such that
  \[
    \lim_{n\to\infty}\P\left(\inf_{s\in [t-\Delta,t]}\left|V_s\left(\frac{\sqrt{\ell k n}}{\log^2 n}\right)\right|>\rho n^2\right)=1.
  \]
\end{lemma}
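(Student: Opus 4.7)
The plan is to couple the interchange process $(\sigma_{t-2\Delta+u})_{u\geq 0}$ with the random graph process $G^{t-2\Delta}$ starting at time $t-2\Delta$, use the sprinkling lemma (Lemma~\ref{lemma:sprinkle}) to produce a giant component in $G^{t-2\Delta}$ within time $\Delta$, and then transfer this information back to cycles of $\sigma$ via Lemma~\ref{lemma:schramm_estimate}, with the required isoperimetric control supplied by Lemma~\ref{lemma:T_k_to_orb}. Throughout, write $\ell^\star := \sqrt{\ell k n}/\log^2 n$.

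By hypothesis applied at $s = t-2\Delta$, $|V_{t-2\Delta}(\ell)| > \delta n^2$ with high probability. Since each cycle of $\sigma_{t-2\Delta}$ lies in a component of $G^{t-2\Delta}_0$, this gives $|V^G_{t-2\Delta,0}(\ell)| > \delta n^2$ with high probability. Applying Lemma~\ref{lemma:sprinkle} with sprinkling time exactly $\Delta = (n^2/\ell)\log n$ yields $|V^G_{t-2\Delta,\Delta}(\delta n^2/8)| \geq \delta n^2/8$ with high probability; because $u \mapsto G^{t-2\Delta}_u$ only adds edges, the bound persists uniformly in $u \in [\Delta, 2\Delta]$. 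For $n$ large we have $\ell^\star \leq \delta n^2/8$, so $|V^G_{t-2\Delta,u}(\ell^\star)| \geq \delta n^2/8$ for every such $u$.

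It remains to show that almost all of this giant component lies in cycles of $\sigma_{t-2\Delta+u}$ of size at least $\ell^\star$, i.e.\ to control $\sup_u |V^G_{t-2\Delta,u}(\ell^\star)\setminus V_{t-2\Delta+u}(\ell^\star)|$. Applying Lemma~\ref{lemma:schramm_estimate} with $\ell^\star$ in place of $\ell$ and $2\Delta$ in place of $\Delta$, the first term evaluates to
\[
\frac{4(\ell^\star)^2(2\Delta)}{kn}\log^2 n \;=\; \frac{8n^2}{\log n} \;=\; o(n^2).
\]
For the second term $2\ell^\star\Delta\,\P(\mathcal I_k[t-2\Delta,t]^c)$, note that since $k\leq \ell$, $|V_s(k)| \geq |V_s(\ell)| > \delta n^2$ uniformly in $s\in[t-2\Delta,t]$ with high probability; translation invariance of the CRW gives $\P(T^s_k<\infty) = n^{-2}\E|V_s(k)|$, hence $\liminf_n \inf_s \P(T^s_k<\infty)\geq \delta$. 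Since $2\Delta$ can exceed the $n/\log n$ window permitted by Lemma~\ref{lemma:T_k_to_orb}, cover $[t-2\Delta,t]$ by $O((n/\ell)\log^2 n)$ sub-intervals of length $n/\log n$ and union-bound, obtaining $\P(\mathcal I_k[t-2\Delta,t]^c) = O\bigl((n/\ell)\log^2 n\cdot e^{-\kappa\log^2 n}\bigr)$. This super-polynomial decay swallows the polynomial prefactor $\ell^\star\Delta$, so the second term is also $o(n^2)$. Markov's inequality then yields $\sup_u|V^G_{t-2\Delta,u}(\ell^\star)\setminus V_{t-2\Delta+u}(\ell^\star)| = o(n^2)$ with high probability, and combining with the previous paragraph gives $|V_s(\ell^\star)| \geq \delta n^2/8 - o(n^2) \geq \rho n^2$ uniformly on $[t-\Delta,t]$ for $\rho = \delta/16$.

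The main technical obstacle is the parameter bookkeeping: one must verify both the hypothesis $k\leq \ell^\star$ of Lemma~\ref{lemma:schramm_estimate} and $\ell^\star \leq \delta n^2/8$ (both satisfied once $\ell$ exceeds a sufficiently large power of $\log n$, the regime in which the recursion is actually iterated), and one must bridge the mismatch between the long fragmentation window $2\Delta$ needed for sprinkling and the short window $n/\log n$ to which Lemma~\ref{lemma:T_k_to_orb} applies directly. The exceptionally strong $e^{-\kappa \log^2 n}$ decay of the isoperimetry estimate is precisely what lets the covering-and-union-bound argument survive against the polynomially large prefactor $\ell^\star\Delta$.
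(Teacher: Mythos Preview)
Your proof is correct and follows the same strategy as the paper's: sprinkle via Lemma~\ref{lemma:sprinkle} starting from time $t-2\Delta$, feed the hypothesis through vertex transitivity to verify $\liminf_n\inf_s\P(T^s_k<\infty)>0$, invoke Lemma~\ref{lemma:T_k_to_orb} for isoperimetry, and then combine Lemma~\ref{lemma:schramm_estimate} with Markov's inequality. Two small remarks: your equality $\P(T^s_k<\infty)=n^{-2}\E|V_s(k)|$ should be an inequality ($\geq$), since the CRW trace can contain vertices not in the orbit, but only this direction is needed; and your covering-by-subintervals step to accommodate the $\Delta\le n/\log n$ hypothesis of Lemma~\ref{lemma:T_k_to_orb} is in fact more careful than the paper, which applies that lemma directly on $[t-2\Delta,t]$ (harmless, since the proof of Lemma~\ref{lemma:T_k_to_orb} clearly tolerates any polynomially bounded window).
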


\begin{proof}
  By Lemma~\ref{lemma:sprinkle} we have that
  \begin{equation}\label{eq:giant_comp}
    \lim_{n\to\infty}\P\left(\inf_{s\in[t-\Delta,t]}|V^G_{t,s}(\delta n^2/8)| \geq \delta n^2/8 \right)=1
  \end{equation}
  where we have also used the fact that $V^G_{t,s}(\ell)\subset V^G_{t,s'}(\ell)$ whenever $s\leq s'$.

  Next set $k=\min\{\ell,n/\log n\}$. As $\{T_\ell<\infty\}\subset\{T_k<\infty\}$ we have that by the vertex transitivity of the graph for any $s\in [t-\Delta, t]$ we have
  \[
  	\P(T^{s}_k<\infty) \geq\P(T^{s}_\ell<\infty) = \frac{1}{|V|}\sum_{v\in V} \P(T^{s}_\ell(v,0)<\infty) \geq \frac{1}{n^2}\E(V_s(\ell)).
  \]
  Thus by the assumption
  \[
    \liminf_{n\to\infty}\inf_{s\in [t-2\Delta,t]}\P(T^{s}_k<\infty) \geq \delta.
  \]
By Lemma~\ref{lemma:T_k_to_orb}, there exist two constants $C,\kappa>0$ such that
  \[
    \P(\mathcal I_k[t-2\Delta,t]^c)=\P\left(\sup_{s\in [t-2\Delta,t]}\max_{v\in V}\iota(\orb^{k}_s(v)) \geq \log^2 n\right)\leq Ce^{-\kappa \log^2 n}
  \]
  Thus by Lemma~\ref{lemma:schramm_estimate},
  \begin{align*}
    \E&\left[\sup_{s \in [0,2\Delta]}|V^G_{t-2\Delta,s}(\sqrt{\ell k n}/\log^2 n)\backslash V_{t-2\Delta+s}(\sqrt{\ell k n}/\log^2 n)|\right] \\
    &\leq \frac{2(\sqrt{\ell k n}/\log^2 n)^2\Delta}{k n} \log^2n + \ell \Delta\,\P(\mathcal I_k[t,t+\Delta]^c) \\
    &= O\left(\frac{n^2}{\log n}\right).
  \end{align*}
  Using the above, together with~\eqref{eq:giant_comp} and Markov's inequality gives the desired result.
\end{proof}

\section{Proof of Theorem~\ref{thm:phase}}\label{sec:mainProof}

\subsection{Subcrtical phase}
We recall that $(G^0_s, s\geq 0)$ is a random graph process in which a uniformly chosen edge is added at rate $1$.
Let $\beta<1/2$ and set $t=\beta n^2$.
Then by using standard branching arguments (see for example the proof of~\cite[Theorem 2.3.1]{MR2656427}) it is possible to show that there exists $C>0$ such that
\[
  \lim_{n\to\infty} \P(|V^G_{0,t}(C\log n)|=0) = 1.
\]
Now the subcritical phase of Theorem~\ref{thm:phase} is a consequence of the fact that under our coupling $V_{t}(C\log n)\subset V^G_{0,t}(C\log n)$.

\subsection{Supercritical phase}

Throughout we let $\beta>1/2$ and set $t=\beta n^2$. We will proceed inductively applying Lemma~\ref{lemma:recursion} repeatedly to obtain larger and larger cycles. Formally, fix sequence of positive real numbers $(\alpha_h,h \in \mathbb{N})$ such that:
\begin{enumerate}
	\item $\alpha_1 \in (0,1/2)$,
	\item for any $h\geq 1$, $\alpha_{h+1} < (1+\alpha_h +\min\{1,\alpha_h\})$,
	\item $\lim_{h\to \infty} \alpha_h = 2$.
\end{enumerate}

We will show by induction that there exists sequence of positive real numbers $(\rho_h,h\in\mathbb{N})$ for which
\begin{equation}\label{eq:induction}
  \lim_{n\to\infty}  \P\left(\inf_{s\in [t-\Delta_h,t]}\left|V_s(n^{\alpha_h})\right|>\rho_h n^2\right)=1,
\end{equation}
where $\Delta_h = n^{2-\alpha_h} \log n$, which implies the statement in Theorem~\ref{thm:phase} in the supercritical phase.

We begin showing the base case $h=1$ by estimating the sizes of the connected components of $G^{0}_s$.
\begin{lemma}\label{lemma:percolation} Let $\beta>1/2$ and set $t=\beta n^2$. Then there exists a constant $\delta>0$ such that
  \[
    \lim_{n\to\infty}\P\left(\inf_{s\in [t-\Delta_1,t]}|V_{0,s}(\delta n^2)|\geq \delta n^2\right)=1.
  \]
\end{lemma}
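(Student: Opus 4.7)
The graph $G^0_s$ is formed by adding, at each swap of the interchange process, the corresponding edge of $H$. Since each edge of $H$ rings independently at rate $1/|E|$, $G^0_s$ has the law of Bernoulli bond percolation on $H$ with edge probability $p_s := 1 - e^{-s/|E|}$. With $|E| = n^2(n-1)$, for any $s \in [t-\Delta_1, t]$ we have $2(n-1)\,p_s \to 2\beta > 1$, so we are uniformly supercritical. The graph process is monotone ($V^G_{0,s}(\ell)$ is non-decreasing in $s$ for any fixed $\ell$), so
\[
  \inf_{s \in [t-\Delta_1, t]} |V^G_{0,s}(\delta n^2)| = |V^G_{0, s_\star}(\delta n^2)|,\qquad s_\star := t - \Delta_1,
\]
and it suffices to establish the bound at the single time $s_\star$.

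The plan at time $s_\star$ is a standard two--step Erd\H{o}s--R\'enyi-type giant-component argument. Fix $\alpha \in (0, \alpha_1)$ and set $s' := s_\star - n^{2-\alpha}\log n$. Because $\alpha_1 < 1/2$ the time $s'$ is still supercritical, with $2(n-1)p_{s'} \geq 2\beta' > 1$ for some $\beta' \in (1/2,\beta)$ and $n$ large. First, I would show $|V^G_{0,s'}(n^\alpha)| \geq c\, n^2$ with high probability, for some constant $c > 0$. Expose the connected cluster of a fixed vertex by breadth-first search in $G^0_{s'}$; as long as at most $n^\alpha$ vertices have been revealed the number of children of each exposed vertex dominates $\mathrm{Bin}(2(n-1) - O(n^\alpha), p_{s'})$, a supercritical offspring law. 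Standard Galton--Watson survival estimates imply that any fixed vertex lies in a cluster of size at least $n^\alpha$ with probability bounded below by a positive constant $c$, and a second moment computation upgrades this to $|V^G_{0,s'}(n^\alpha)| \geq (c/2)\,n^2$ with high probability. Second, the edges added in the window $[s', s_\star]$ of length $n^{2-\alpha}\log n = (n^2/n^\alpha)\log n$ are independent of $G^0_{s'}$, so the sprinkling argument of Lemma~\ref{lemma:sprinkle} applies with $G^0_{s'}$ in place of $G^t_0$ (its proof uses only monotonicity of the graph process and independence of the sprinkled edges), yielding a component of size at least $(c/16)\,n^2$ at time $s_\star$ with high probability.

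The main obstacle is the second-moment bound for $|V^G_{0,s'}(n^\alpha)|$. On the Hamming graph two vertices sharing a row or column have heavily overlapping neighbourhoods, so their clusters are strongly correlated; however, only a $O(1/n)$ fraction of vertex pairs $(v,w)$ lie in a common row or column, and for all other pairs the first $n^\alpha = o(n)$ steps of BFS from $v$ and from $w$ can be coupled to explore essentially disjoint edge sets, giving
\[
  \P\left(|C(v)|\geq n^\alpha,\ |C(w)|\geq n^\alpha\right) \leq (1+o(1))\,\P(|C(v)|\geq n^\alpha)^2.
\]
This bookkeeping is standard but tedious; alternatively one could quote an off-the-shelf supercritical giant-component theorem for Hamming-graph percolation (e.g.\ from the work of van der Hofstad and \L uczak and related papers) and skip the explicit second-moment argument entirely.
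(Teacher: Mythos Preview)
Your approach is correct and follows the same two-step scheme as the paper's proof: establish a positive density of medium-sized clusters at a slightly earlier supercritical time, then invoke Lemma~\ref{lemma:sprinkle} to sprinkle into a giant component, with monotonicity of $s\mapsto V^G_{0,s}(\ell)$ reducing the infimum over $[t-\Delta_1,t]$ to the single left endpoint. The one substantive difference is the choice of intermediate scale: you target clusters of size $n^\alpha$, whereas the paper uses $\log^2 n$ at a fixed earlier time $s=\beta' n^2$ with $\beta'\in(1/2,\beta)$. That smaller scale makes the first step an immediate consequence of standard supercritical branching-process survival estimates (the exploration runs for only $\log^2 n$ steps, so overlap and correlation effects are trivially negligible), entirely sidestepping the second-moment obstacle you identified as the main difficulty; since the required sprinkling time $(n^2/\log^2 n)\log n = n^2/\log n$ is still $o(n^2)$, nothing is lost in the second step.
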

\begin{proof}
	Fix $\beta>1/2$ and $\beta' \in (1/2, \beta)$ and set $t=\beta n^2$ and $s=\beta' n^2$. Using standard branching techniques (see for example~\cite[Theorem 2.3.2]{MR2656427} and \cite[Lemma 2.3.4]{MR2656427}) one can show that there exists a $\gamma>0$ such that
  \[
    \lim_{n\to\infty}\P\left(|V_{0,s}(\log^2 n)|\geq \gamma n^2\right)=1.
  \]
  The result now follows from Lemma~\ref{lemma:sprinkle}.
\end{proof}
%
%
%

Next we note that for $k=\log^2 n$, we have that trivially $\mathcal I_k[0,\infty)$ holds.
Thus applying Lemma~\ref{lemma:schramm_estimate} we see that and noting that $\alpha_1 <1/2$,
\[
  \E\left[\sup_{s \in [0,t]}|V^G_{0,s}( n^{\alpha_1})\backslash V_{s}(n^{\alpha_1})|\right] \leq \frac{4n^{2\alpha_1} t}{n}=o(n^2).
\]
Now an application of Markov's inequality and using the above together with Lemma~\ref{lemma:percolation} gives that there exists a $\rho_1>0$ such that
\[
\lim_{n\to\infty}  \P\left(\inf_{s\in [t-\Delta_1,t]}\left|V_s(n^{\alpha_1})\right|>\rho_h n^2\right)=1
\]
which shows the case $h=1$ in~\eqref{eq:induction}.

Now we induct. Suppose that \eqref{eq:induction} holds for some $h\geq 1$. We apply Lemma \ref{lemma:recursion} with $\ell(n) = n^{\alpha_h}$ and $\delta = \rho_h$ to get that there exists a $\rho_{h+1}>0$ such that
\[
  \lim_{n\to\infty}  \P\left(\inf_{s\in [t-\Delta_{h+1},t]}\left|V_s(n^{\alpha_{h+1}})\right|>\rho_{h+1} n^2\right)=1
\]
which finishes the proof.
%
%
%

\section{Open questions and discussion}\label{sec:openQuestionsAndDiscussion}

The results in this paper imply that for $t=\beta n^2$ with $\beta>1/2$, for any $\ell=\ell(n)$, there exists two constants $C,\kappa>0$ such that
\begin{equation}\label{eq:almostGoodIota}
	\P\left(\max_{v\in V}\iota(\orb^{n^{\ell}}_{t}(v)) \geq \frac{\ell}{n}\log^3 n\right) \leq c e^{-\kappa\log^2 n}.
\end{equation}
We conjecture that sharper bounds can be obtained, namely, there exists a constant $C>0$ such that for any $\ell\geq n \log^2 n$
\begin{equation}\label{eq:goodIota}
	\P\left(\max_{v\in V}\iota(\orb^{\ell}_t(v)) \geq C \ell /n \right) \leq f(n),
\end{equation}
for some fast decaying function $f(n)$.
In other words~\eqref{eq:almostGoodIota} is sharp up to logarithmic terms.
We believe that showing~\eqref{eq:goodIota} and adapting the techniques of \cite{MR2754801}, one can obtain the existence of cycles of macroscopic length during some small time interval around $t=\beta n^2$ for any $\beta>1/2$.
Let us describe some of the difficulties that arise in proving~\eqref{eq:goodIota}.
The result of Lemma~\ref{lemma:intersect_line} is sharp and can be used to show that for any $i$ and $v$ the tail of $X_i(v) = |\orb^{n}_t(v)\cap L_i|$ decays exponentially.
In order to deduce \eqref{eq:almostGoodIota} we use a union bound over $i$ and $v$, which results in logarithmic errors.
One way to avoid this would be to prove with high probability $|\orb^{n \log^2n }_t(v)\cap L_i|\leq C \log^2n$.
To obtain such a result we would need to understand what happens when the CRW intersects its past (at the moment we avoid this difficulty by assuming $k\leq n/2$ in Lemma~\ref{lemma:intersect_line}).
Another way, would be to find a notion of isoperimetry different than $\iota$. Essentially one needs to be able show that for some constant $C>0$, with high probability $\max_v\sum_{i=0}^{n-1}X_i(v)^2 \leq C n$ (using Lemma \ref{lem:isoperymetryByIota} and \eqref{eq:almostGoodIota} we only obtain the bound $n \log^3n $).
However, it is far from clear how to handle the dependence of the random variables $X_i(v)$.

%

We believe the proof in this paper applies to other Hamming graphs. For $d,k\in \N$, the $(d,k)$-Hamming graph $H(d,k)$ is a graph on the vertices $\{0,\dots,k-1\}^d$ where an edge is present between any two vertices which differ on exactly one co-ordinate. We believe that the proof in this paper should easily adapt to every $H(d,n)$ where $d$ is fixed.

It would be very interesting to investigate the case when $d$ is varying. Arguably, the most interesting case is $H(n,2)$ which is the hypercube. We make the following conjecture.

\begin{conjecture}
  Consider the interchange process $\sigma=(\sigma_t:t \geq 0)$ on $H(d,k)$ where at least one (or possibly both) of $d,k$ is increasing with $n$. Let $\beta>1$ and set $t=\beta d k^d/(2d-2)$. Then there exists a constant $C>0$ such that
  \[
    \lim_{\delta\downarrow 0}\lim_{n\to\infty}\P\big(|V_t(\delta k^d)| \geq C k^d\big)=1.
  \]
\end{conjecture}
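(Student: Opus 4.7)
The plan is to mirror the three-phase architecture of the proof of Theorem~\ref{thm:phase}: (i) a percolation base case supplying a positive density of vertices in cycles of some vanishing polynomial size, (ii) isoperimetric control on the cycles of $\sigma_t$ via a cyclic random walk analysis, and (iii) an iterative sprinkling argument in the spirit of Lemma~\ref{lemma:recursion} to promote those cycles up to size $\delta k^d$. The choice $t_c = dk^d/(2d-2)$ is calibrated so that the expected number of swaps per edge, $t/|E| = \beta/((d-1)(k-1))$, sits just above the percolation threshold $\approx 1/(d(k-1)-1)$ of the underlying random graph $G^0$, and a standard branching-process argument for Erd\H{o}s--R\'enyi-type percolation on vertex-transitive graphs of growing degree should produce a giant component in $G^0_t$ whenever $\beta>1$.

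For step (ii), the natural generalisation of the isoperimetric quantity is
\[
  \iota(A) := \max_{1\leq j \leq d}\,\max_{L \in \mathcal L_j} |A \cap L|,\qquad A \subset V,
\]
where $\mathcal L_j$ is the family of $k^{d-1}$ axis-parallel lines in the $j$-th coordinate direction. The bridge picture and the CRW of Section~\ref{sec:crw} carry over verbatim. The L-shaped jump used in Lemma~\ref{lemma:one_disjoint_line} would become a $d$-step jump, traversing exactly one bridge along each of $d$ distinct coordinate directions; after such an event the endpoint is approximately uniform in $V$, which is the analog of the mixing property exploited in Lemma~\ref{lem:LshapedJumpProperties}. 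With this in hand, the tail bound of Lemma~\ref{lemma:intersect_line} for $|Z_k \cap L|$ follows from the same excursion/regeneration argument. Step (iii) is then nearly mechanical: one imports Lemmas~\ref{lemma:sprinkle} and \ref{lemma:schramm_estimate} with the vertex count $n^2$ replaced by $k^d$ and the line length $n$ replaced by $k$, and iterates an analog of Lemma~\ref{lemma:recursion}, each step enlarging cycles by a polynomial factor until the size $\delta k^d$ is reached.

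The principal obstacle is the hypercube regime $k=2$. There each line $L$ contains only two vertices, so $\iota(A) \leq 2$ trivially and the isoperimetry-based argument collapses entirely. A genuinely different notion is required, for example the maximum intersection of $A$ with any sub-hypercube of fixed intermediate dimension, paired with a multi-step L-jump that mixes the coordinates progressively rather than all at once. Controlling such an event through the CRW, while keeping sharp enough track of where the walk may already have visited, is the main new difficulty I anticipate. In the intermediate regime where both $d$ and $k$ grow, the rarity of a $d$-step L-jump (probability $d!/d^d = O(e^{-d})$) forces $k$ to compensate in the error terms of the analog of Lemma~\ref{lemma:one_disjoint_line}; reconciling these two scales will likely require a quantitative refinement of the CRW estimates and a merging of our line-based approach with the complete-graph techniques of \cite{MR2754801} and the hypercube methods of \cite{hypercube_stirring}.
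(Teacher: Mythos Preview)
This statement is a \emph{conjecture} in the paper, placed in Section~\ref{sec:openQuestionsAndDiscussion} (``Open questions and discussion''). The paper does not prove it and does not claim to; it is presented explicitly as an open problem. Consequently there is no ``paper's own proof'' to compare your proposal against.

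What you have written is a research outline, not a proof, and you yourself flag the essential gap: in the hypercube regime $k=2$ (and more generally whenever $k$ stays bounded while $d\to\infty$) the line-based isoperimetric quantity $\iota$ degenerates, the $d$-step L-jump has probability $O(e^{-d})$, and the entire machinery of Lemmas~\ref{lemma:one_disjoint_line}--\ref{lemma:intersect_line} collapses. The paper is aware of this; it states only that the proof ``should easily adapt to every $H(d,n)$ where $d$ is fixed'' and then poses the varying-$d$ case, singling out the hypercube $H(n,2)$ as the most interesting instance, precisely because the existing techniques do not reach it. Your suggestion to replace lines by sub-hypercubes of intermediate dimension and to mix coordinates progressively is a plausible direction, but as written it is speculative: you have not specified the replacement for $\iota$, the regeneration event, or the analogue of Lemma~\ref{lem:LshapedJumpProperties}, and you have not indicated how the error terms in the sprinkling/Schramm coupling would close when $d$ grows. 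Until those pieces are supplied, the proposal remains a sketch of where a proof \emph{might} go rather than a proof.
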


We also believe that many of the results that hold on the complete graph also hold on $2$-dimensional Hamming graph.
For example, \textcite{MR2166362} shows that in the supercritical phase on the complete graph, the cycle lengths suitably rescaled converge in distribution to a Poisson-Dirichlet random variable.

The interchange process is related to the loop representation of the correlation on the $1/2$ quantum Heisenberg ferromagnet. The model in question is defined by the measure $\mathbb{Q}$ given by
	\[
		\mathbb{Q}(\sigma_t = \sigma) =\frac{1}{Z} 2^{\#\text{cycles}(\sigma)} \P(\sigma_t = \sigma)
	\]
  with some normalising constant $Z>0$.
  Recently, \textcite{ECP4328} proved existence of macroscopic cycles on the complete graph for the model with any weight $\theta>1$ in place of $2$. It would be interesting to see if it is possible to combine our proof with his methods to obtain an analog of Theorem~\ref{thm:phase} for the measure $\mathbb{Q}$.

\subsection*{Acknowledgments} We would like to thank Nathana\"el Berestycki, Roman Koteck\'y and Daniel Ueltschi for useful discussions.
The research of PM was supported by the National Science Centre UMO-2014/15/B/ST1/02165 grant.
B\c{S} acknowledges support from EPSRC grant number EP/L002442/1.
\renewcommand*{\bibfont}{\small}
\printbibliography
\end{document}